\newtheorem{thm}{Theorem}[section]
\newtheorem{cor}[thm]{Corollary}
\newtheorem{lem}[thm]{Lemma}
\newtheorem{prop}[thm]{Proposition}
\theoremstyle{definition}
\newtheorem{defn}[thm]{Definition}
\theoremstyle{remark}
\newtheorem{rem}[thm]{Remark}
\numberwithin{equation}{section}
\newcommand{\norm}[1]{\left\Vert#1\right\Vert}
\newcommand{\abs}[1]{\left\vert#1\right\vert}
\newcommand{\set}[1]{\left\{#1\right\}}
\newcommand{\eps}{\varepsilon}
\newcommand{\dbar}{\bar\partial}
\newcommand{\dbarstar}{\bar\partial^{\ast}}
\newcommand{\zbar}{\overline{z}}
\newcommand{\Htg}{\mathbb{H}_{\gamma}}
\newcommand{\BpHt}{P_{\gamma}}
\newcommand{\ddbar}{\partial\bar\partial}
\DeclareMathOperator{\dom}{Dom}
\DeclareMathOperator{\re}{Re}
\DeclareMathOperator{\im}{Im}
\DeclareMathOperator{\dist}{dist}
\DeclareMathOperator{\meas}{meas}
\begin{document}

\title[$L^p$ Mapping Properties on Lipschitz Domains]{$L^p$ Mapping Properties for the Cauchy-Riemann Equations on Lipschitz Domains Admitting Subelliptic Estimates}%
\author{Phillip S. Harrington}
\address{SCEN 309, 1 University of Arkansas, Fayetteville, AR 72701}%
\email{psharrin@uark.edu}%

\author{Yunus E. Zeytuncu}
\address{2014 CASL Building, University of Michigan-Dearborn, Dearborn, MI 48128}%
\email{zeytuncu@umich.edu}%

\thanks{The work of the second author was partially supported by a grant from the Simons Foundation (\#353525)}

\subjclass[2010]{32W05, 32W10}

\keywords{Lipschitz domains, good weight functions, $\overline{\partial}$-Neumann operators}

\begin{abstract}
We show that on bounded Lipschitz pseudoconvex domains that admit good weight functions the $\dbar$-Neumann operators $N_q, \dbar^* N_{q}$, and $\dbar N_{q}$ are bounded on $L^p$ spaces for some values of $p$ greater than 2.
\end{abstract}
\maketitle



\section{Introduction}

Let $\Omega\subset \mathbb{C}^n$ be a bounded Lipschitz pseudoconvex domain.  Let $\dbar$ denote the Cauchy-Riemann operator acting on $(0,q)$-forms, and let $\dbarstar$ denote its adjoint with respect to the $L^2$ inner product.  For $1\leq q\leq n$, let $\Box_q:=\dbar\dbarstar+\dbarstar\dbar$ be the complex Laplacian, which is an unbounded self-adjoint operator on $L^2_{(0,q)}(\Omega)$. The self-adjoint bounded inverse of this operator is the $\dbar$-Neumann operator $N_q$. In the study of $\Box_q$ and $N_q$, two other operators $\dbar N_{q-1}:L^2_{(0,q-1)}(\Omega)\rightarrow L^2_{(0,q)}(\Omega)$ and $ \dbar^* N_{q}:L^2_{(0,q)}(\Omega)\rightarrow L^2_{(0,q-1)}(\Omega)$ naturally arise as they provide canonical solution operators to $\dbarstar$ and $\dbar$-problems. Another operator of interest in this setting is the Bergman projection $P_q:L^2_{(0,q)}(\Omega)\rightarrow Ker(\dbar)\cap L^2_{(0,q)}(\Omega)$. We refer to \cite{ChSh01} and \cite{Str10} for details about these operators and the setup.

In this paper we are interested in $L^p$ mapping properties of the $\dbar$-Neumann operators $N_q, \dbar^* N_{q}$, and $\dbar N_{q}$. In particular, we observe that these operators exhibit low-level $L^p$ regularity on bounded Lipschitz pseudoconvex domains that admit good weight functions. It is known that under more stringent geometric assumptions these operators are continuous on $L^p$ spaces for the full range of $p\in (1,\infty)$. For example, if $\Omega$ is a smoothly bounded pseudoconvex domain of finite type in $\mathbb{C}^n$ ($n\geq 3$) having comparable Levi eigenvalues, then it is shown in \cite[Theorem 1.1]{Koe04} that the $\dbar$-Neumann operators are not only bounded on corresponding $L^p$ spaces for all $p\in(1,\infty)$, but they gain derivatives in the Sobolev scale. Similar results were previously obtained on smooth strongly pseudoconvex domains in \cite{Cha88}, and on smooth finite type domains in $\mathbb{C}^2$ in \cite{CNS92}.

In contrast to these boundedness results on smooth domains, in some recent papers \cite{ZeytuncuTran, ChakZey, Chen14, EdM16, ChenZeytuncu} it is noted that on certain domains the Bergman projection $P_0$ may fail to be bounded on $L^p$ spaces for any $p\not=2$. Following the ideas in these papers one can also observe that the $\dbar$-Neumann operators may exhibit similarly degenerate $L^p$ mapping properties on domains with singular boundaries.

In order to illustrate this point more specifically, we look at the $\dbar$-Neumann operators  on the generalized Hartogs triangle $\mathbb{H}_{\gamma}$ defined by $$\Htg=\left\{(z_1,z_2)\in\mathbb{C}^2~:~|z_1|^{\gamma}<|z_2|<1\right\}$$
for a positive irrational parameter $\gamma>0$. Let $\BpHt$ denote the Bergman projection from $L^2(\Htg)$ onto $L^2(\Htg)\cap\ker\dbar$. In \cite{EdM16} Theorem 0.3, Edholm and McNeal show that $P_{\gamma}$ is bounded on $L^p(\Htg)$ if and only if $p=2$. Using their proof in Section 5, we will show that the canonical solution operator $\dbarstar N_1$ is bounded from $L^p_{(0,1)}(\Htg)$ to $L^p_{(0,0)}(\Htg)$ if and only if $p=2$.  Using a classical result of Dirichlet, Edholm and McNeal begin with a sequence of rational numbers $\set{\frac{m_j}{n_j}}$ converging to $\gamma$ such that $\abs{\frac{n_j}{m_j}-\frac{1}{\gamma}}<\frac{1}{m_j^2}$.  Using this, they show that there exists an integer $0\leq\beta_1\leq m_j-1$ such that  $\beta_2=\frac{1-n_j\beta_1-n_j-m_j}{m_j}$ is also an integer and $z_1^{\beta_1} z_2^{\beta_2}\in L^2(\Htg)$.  Since $n_j$ is a positive integer, $n_j\geq 1$, so $\beta_2<-1$.  Set $f_j(z_1,z_2)=\frac{z_1^{\beta_1}}{\zbar_2^{\beta_2}}$. Then $\dbar f_j(z)$ is a $(0,1)$ form with bounded coefficients and consequently $\dbar f_j\in L^p_{(0,1)}(\Htg)$ for all $p\geq 1$. Furthermore, using Proposition 4.1 in \cite{EdM16}, we see that $\BpHt(f_j)=c_jz_1^{\beta_1}z_2^{\beta_2}$ for some constant $c_j$.  Edholm and McNeal then show that for any $p_0>2$ one can choose $j$ large enough so that $z_1^{\beta_1}z_2^{\beta_2}\not \in L^{p_0}(\Htg)$. On the other hand, we note that $\dbarstar N_1(\dbar f_j)=f_j-\BpHt(f_j)$. Hence $\dbarstar N_1$ is not bounded from $L^p_{(0,1)}(\Htg)$ to $L^p_{(0,0)}(\Htg)$ when $p>2$.

If we take a smooth compactly supported function $\chi$ on $\Htg$ that is radially symmetric, i.e., $\chi(z_1,z_2)=\chi(|z_1|,|z_2|)$, then the proof of Proposition 4.1 in \cite{EdM16} can be used to show that $\BpHt(\chi f_j)=\widehat{c}_jz_1^{\beta_1}z_2^{\beta_2}$ for some other constant $\widehat{c}_j$.  Since $\chi f_j$ is compactly supported in $\Htg$, $\dbar\left(\chi f_j\right)\in\dom(\dbarstar)$, so we can use the decomposition in \cite[Theorem 4.4.3]{ChSh01} to obtain $N_0(\dbarstar\dbar(\chi f_j))=\chi f_j-\BpHt(\chi f_j)$. Again, $\dbarstar\dbar(\chi f_j)\in  L^p(\Htg)$ for all $p\geq 1$, but for any $p_0>2$ one can choose $j$ large enough such that $N_0(\dbarstar\dbar \chi f_j) \not \in L^{p_0}(\Htg)$. Hence $N_0$ is not bounded from $L^p_{(0,0)}(\Htg)$ to $L^p_{(0,0)}(\Htg)$ when $p>2$. Combining the calculations for $\dbar^* N_1$ with a duality argument, one can also show  $\dbar N_0$ is not bounded from $L^p_{(0,0)}(\Htg)$ to $L^p_{(0,1)}(\Htg)$ when $1<p<2$. In other words, on the generalized Hartogs triangle $\Htg$ the $\dbar$-Neumann operators $N_q, \dbar^* N_{q}$, and $\dbar N_{q}$ exhibit degenerate $L^p$ mapping properties.

When we compare the $L^p$ boundedness results for the full range of $p\in (1,\infty)$ on smooth domains and this peculiar degenerate $L^p$ regularity on domains like the generalized Hartogs triangles, a natural question arises: is it possible to obtain low level $L^p$ regularity on domains with Lipschitz boundaries? The similar question on low level Sobolev regularity was previously answered in \cite{Koh99} and \cite{BeCh00}. Initial work on $L^p$ estimates on domains with low boundary regularity was carried out in \cite{BoCh90}.  In this paper, we provide an affirmative answer to this question and establish the following low level $L^p$ regularity results.

\begin{thm}
\label{thm:main_theorem}
  Let $\Omega\subset\mathbb{C}^n$ be a bounded Lipschitz pseudoconvex domain and suppose that there exist constants $0<\eps\leq\frac{1}{2}$ and $C>0$ and a continuous function $\lambda$ satisfying $0\leq\lambda\leq 1$ and
  \[
    i\ddbar\lambda\geq iC(\delta(z))^{-2\eps}\ddbar\abs{z}^2
  \]
  on $\Omega$ in the sense of currents.  Then for any $p$ satisfying
  \begin{equation}
  \label{eq:p_range}
    2<p<\frac{2n}{n-\eps}.
  \end{equation}
  the $\dbar$-Neumann operators
  \begin{align*}
    N_q:&L^{p/(p-1)}_{(0,q)}(\Omega)\rightarrow L^p_{(0,q)}(\Omega)\\
    \dbar N_{q-1}:&L^2_{(0,q-1)}(\Omega)\rightarrow L^p_{(0,q)}(\Omega)
  \end{align*}
  are continuous for all $1\leq q\leq n$ and
  \[
    \dbar^* N_{q}:L^2_{(0,q)}(\Omega)\rightarrow L^p_{(0,q-1)}(\Omega).
  \]
  is continuous for all $2\leq q\leq n$.  If, in addition, there exists a defining function $\rho$ for $\Omega$ and a constant $0<\eta<1$ such that $-(-\rho)^\eta$ is plurisubharmonic on $\Omega$, then for any $p$ satisfying
  \begin{equation}
  \label{eq:p_range_2}
    2<p<\min\left\{\frac{8n^2}{4n^2-\eta}, \frac{4n^2}{2n^2-\eps}\right\},
  \end{equation}
  the solution operator
  \[
    \dbar^* N_1:L^2_{(0,1)}(\Omega)\rightarrow L^p(\Omega)
  \]
  is continuous.
\end{thm}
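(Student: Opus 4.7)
The plan is to derive all the mapping statements from weighted $L^2$ estimates produced by the good weight $\lambda$, then convert them into $L^p$ estimates via Sobolev embedding, using the operator factorization of $N_q$ together with duality to unify the pieces. My first move is to insert $\lambda$ into the twisted Kohn--Morrey--Hörmander identity on $\Omega$: since $\lambda$ is bounded, weighted and unweighted $L^2$ norms are comparable, while the hypothesis $i\ddbar\lambda \geq iC\,\delta(z)^{-2\eps}\,\ddbar|z|^2$ contributes a Hessian lower bound diverging like $\delta^{-2\eps}$ at $\partial\Omega$. After a standard density argument (valid on Lipschitz boundaries), the expected output is the basic estimate
\[
  \int_\Omega |\alpha|^2\,\delta^{-2\eps}\,dV \;\leq\; C\bigl(\|\dbar\alpha\|^2 + \|\dbarstar\alpha\|^2\bigr)
\]
for every $(0,r)$-form $\alpha\in\dom(\dbar)\cap\dom(\dbarstar)$ with $r\geq 1$.

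Applying this with $\alpha = \dbarstar N_q u$ (requiring $q\geq 2$ so that $r=q-1\geq 1$) and with $\alpha = \dbar N_{q-1}u$ (so that $r=q\geq 1$), and using $\dbarstar(\dbarstar N_q u)=0$, $\dbar(\dbar N_{q-1}u)=0$ together with the orthogonal splitting $\|\dbar\dbarstar N_q u\|^2 + \|\dbarstar\dbar N_q u\|^2 \leq \|u\|^2$, one gets
\[
  \|\dbarstar N_q u\|_{L^2(\delta^{-2\eps})} + \|\dbar N_{q-1}u\|_{L^2(\delta^{-2\eps})} \;\leq\; C\,\|u\|_{L^2}.
\]
The next step is to upgrade the weighted $L^2$ bound to an $H^\eps$ bound: combining the above with interior elliptic regularity for $\Box$ and interpolating should give genuine fractional Sobolev regularity of order $\eps$, after which the Sobolev embedding $H^\eps(\Omega)\hookrightarrow L^p(\Omega)$, valid on Lipschitz domains for $p\leq 2n/(n-\eps)$ (real dimension $2n$), produces the range \eqref{eq:p_range}. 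For $N_q$ itself, the identity $N_q = \dbar N_{q-1}\dbarstar N_q + \dbarstar N_{q+1}\dbar N_q$ (the second summand vanishing when $q=n$), together with the adjoint relations $(\dbar N_{q-1})^{*} = \dbarstar N_q$ and $(\dbar N_q)^{*} = \dbarstar N_{q+1}$, allows me to dualize the $L^2\to L^p$ bounds to $L^{p/(p-1)}\to L^2$ bounds and compose to obtain $N_q:L^{p/(p-1)}\to L^p$.

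The case of $\dbarstar N_1$ is different because it lands in $(0,0)$-forms, and the weighted estimate above fails for $r=0$ since holomorphic functions lie in $\ker\dbar\cap\ker\dbarstar$ but not in $L^2(\delta^{-2\eps})$. The extra assumption that $-(-\rho)^\eta$ is plurisubharmonic is precisely what supplies the missing control via the Berndtsson--Charpentier mechanism: it yields fractional Sobolev continuity of the Bergman projection $P_0$ with a gain governed by $\eta$. Writing $\dbarstar N_1 u = w - P_0 w$ for a particular solution $w$ of $\dbar w=u$ (itself handled by the first part of the argument applied in bidegree $(0,1)$), and combining the $\eps$-regularity of $w$ with the $\eta$-regularity of $P_0 w$, should yield the range \eqref{eq:p_range_2} in which the two competing bounds $8n^2/(4n^2-\eta)$ and $4n^2/(2n^2-\eps)$ reflect precisely these two sources of regularity.

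The main obstacle is the conversion from $\|\cdot\|_{L^2(\delta^{-2\eps})}$ to a genuine Sobolev or $L^p$ estimate on a Lipschitz domain. The fractional Hardy inequality gives $H^\eps_0(\Omega)\hookrightarrow L^2(\delta^{-2\eps})$, which points the wrong way; recovering the reverse inclusion along the range of the canonical solution operators must exploit both the interior smoothness of $\Box$ and the specific equations satisfied by $\dbarstar N_q u$ and $\dbar N_{q-1}u$, presumably through a carefully chosen interpolation argument with one endpoint at the weighted estimate and the other at the standard $L^2$ theory, adapted to the Lipschitz boundary.
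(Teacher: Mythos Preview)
Your overall architecture is right: the basic weighted estimate from the twisted Kohn--Morrey--H\"ormander identity, the factorization $N_q = \dbar N_{q-1}\dbarstar N_q + \dbarstar N_{q+1}\dbar N_q$ with duality to handle $N_q$, and the appeal to Berndtsson--Charpentier for the $q=1$ solution operator. But the step you flag as ``the main obstacle'' is a genuine gap, and the paper resolves it by a different device than the one you propose.

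You want to pass from $L^2(\delta^{-2\eps})$ to $H^\eps$ via interior ellipticity and interpolation, then invoke Sobolev embedding. As you yourself note, the fractional Hardy inequality points the wrong way, and there is no clean interpolation that produces $H^\eps$ regularity up to a Lipschitz boundary from a weighted $L^2$ bound alone. The paper bypasses fractional Sobolev spaces entirely. For $u\in\dom\dbar\cap\dom\dbarstar$ it writes $u=\tilde u+h$, where $\tilde u\in W^1_0(\Omega)$ solves the Dirichlet problem for the ordinary Laplacian with data $(\dbar\dbarstar+\vartheta\dbar)u\in W^{-1}$, and $h=u-\tilde u$ therefore has \emph{harmonic} coefficients. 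The piece $\tilde u$ lands in $L^p$ by the standard Sobolev embedding, which is weaker than \eqref{eq:p_range}. For the harmonic piece, the mean value property plus Jensen's inequality gives directly
\[
  \norm{h}_{L^p(\Omega)} \;\lesssim\; \norm{\delta^{-n(p-2)/p}h}_{L^2(\Omega)},
\]
with no Sobolev space in sight. Since $p<2n/(n-\eps)$ is exactly $n(p-2)/p<\eps$, the basic estimate with $s=0$ controls the right side by $\norm{\dbar h}_{L^2}+\norm{\dbarstar h}_{L^2}$, and hence by $\norm{\dbar u}_{L^2}+\norm{\dbarstar u}_{L^2}$ via the $W^1$ bound on $\tilde u$. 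This harmonic splitting is the missing idea.

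For $\dbarstar N_1$ the paper also proceeds differently from your sketch. Rather than writing $\dbarstar N_1 u=w-P_0 w$ and estimating each piece, it dualizes to $(\dbarstar N_1)^*$ and reduces to proving $\norm{u}_{L^2}\lesssim\norm{\dbarstar u}_{L^{p^*}}$ for $u\in\ker\dbar\cap\dom\dbarstar$. Here the same harmonic decomposition is repeated, now using Jerison--Kenig for the $L^{p^*}$ Dirichlet problem on Lipschitz domains, together with the $p<2$ mean-value inequality $\norm{\delta^{n(2-p^*)/p^*}h}_{L^2}\lesssim\norm{h}_{L^{p^*}}$ applied to the harmonic forms $\dbar h$ and $\dbarstar h$, and the basic estimate with a strictly positive $s$ (this is where the Diederich--Forn\ae ss exponent enters, producing the first term in \eqref{eq:p_range_2}). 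The Berndtsson--Charpentier bound is used to control $\dbarstar(\dbarstar N_1)^*g=(I-P_0)g$ in $L^{p^*}$, not $P_0 w$ in $L^p$.
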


\begin{cor}
\label{cor:finite_type}
  Let $\Omega\subset\mathbb{C}^n$ be a bounded domain that is locally a transverse intersection of smooth finite type domains.  Then there exists some $p_0>2$ such that the $\dbar$-Neumann operators
  \begin{align*}
    N_q:&L^{p/(p-1)}_{(0,q)}(\Omega)\rightarrow L^p_{(0,q)}(\Omega)\\
    \dbar^* N_{q}:&L^2_{(0,q)}(\Omega)\rightarrow L^p_{(0,q-1)}(\Omega)\\
    \dbar N_{q-1}:&L^2_{(0,q-1)}(\Omega)\rightarrow L^p_{(0,q)}(\Omega)
  \end{align*}
  are continuous for all $1\leq q\leq n$ and $2<p<p_0$.
\end{cor}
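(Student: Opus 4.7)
The plan is to reduce the corollary to Theorem~\ref{thm:main_theorem} by verifying both of its hypotheses using the local finite-type structure of $\Omega$. That $\Omega$ is bounded Lipschitz pseudoconvex is immediate: a transverse intersection of smooth hypersurfaces has Lipschitz boundary, and intersections of smooth pseudoconvex domains (every smooth finite type domain being pseudoconvex) are pseudoconvex.

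Next I would construct the weight function $\lambda$. Fix a finite cover of $\overline{\Omega}$ by balls $B_\alpha$ on which $\Omega\cap B_\alpha=\bigcap_{j}\Omega_{\alpha,j}\cap B_\alpha$, with each $\Omega_{\alpha,j}$ a smooth bounded pseudoconvex finite type domain. Catlin's construction of plurisubharmonic functions with large Hessians on finite type domains provides, for each $\Omega_{\alpha,j}$, a continuous plurisubharmonic $\lambda_{\alpha,j}\in C(\overline{\Omega_{\alpha,j}})$ with $0\leq\lambda_{\alpha,j}\leq 1$ and $i\ddbar\lambda_{\alpha,j}\geq c\,\delta_{\Omega_{\alpha,j}}^{-2\eps_{\alpha,j}}i\ddbar|z|^2$ for some $\eps_{\alpha,j}>0$ depending on the D'Angelo type. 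Transversality gives $\delta_\Omega(z)\sim\min_j\delta_{\Omega_{\alpha,j}}(z)$ on $B_\alpha\cap\Omega$, so $\mu_\alpha:=\sum_j\lambda_{\alpha,j}$ is plurisubharmonic on $B_\alpha\cap\Omega$ and satisfies $i\ddbar\mu_\alpha\geq c'\delta_\Omega^{-2\eps_\alpha}i\ddbar|z|^2$ with $\eps_\alpha=\min_j\eps_{\alpha,j}$. I would then glue the $\mu_\alpha$ across patches using a regularized maximum of suitably rescaled plurisubharmonic functions, obtaining a global continuous plurisubharmonic $\lambda$ on $\Omega$ with $0\leq\lambda\leq 1$ and $i\ddbar\lambda\geq C\delta_\Omega^{-2\eps}i\ddbar|z|^2$ for $\eps=\min_\alpha\eps_\alpha$, further reducing $\eps\leq\tfrac{1}{2}$ if needed.

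For the Diederich-Fornaess hypothesis, each smooth bounded pseudoconvex finite type $\Omega_{\alpha,j}$ admits a smooth defining function $r_{\alpha,j}$ and an exponent $\eta_{\alpha,j}\in(0,1)$ with $-(-r_{\alpha,j})^{\eta_{\alpha,j}}$ plurisubharmonic. Writing $-(-r_{\alpha,j})^{\eta}$ as a convex increasing function of $-(-r_{\alpha,j})^{\eta_{\alpha,j}}$ shows the same property holds for any $0<\eta\leq\eta_{\alpha,j}$. Locally $\rho_\alpha:=\max_j r_{\alpha,j}$ defines $\Omega\cap B_\alpha$, and for $\eta\leq\min_j\eta_{\alpha,j}$ one has
\[
  -(-\rho_\alpha)^\eta=\max_j\bigl(-(-r_{\alpha,j})^\eta\bigr),
\]
which is plurisubharmonic as a max of plurisubharmonic functions. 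I would patch the $\rho_\alpha$ into a single global Lipschitz defining function $\rho$ for $\Omega$, preserving the local max structure near the boundary, and verify that $-(-\rho)^\eta$ remains plurisubharmonic globally for $\eta=\min_{\alpha,j}\eta_{\alpha,j}$. Theorem~\ref{thm:main_theorem} then gives the claimed $p_0>2$.

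The main obstacle will be the globalization step for $\lambda$: each $\mu_\alpha$ is plurisubharmonic only on $B_\alpha\cap\Omega$, so assembling them into a single global $\lambda$ that preserves plurisubharmonicity together with the quantitative lower bound on $i\ddbar\lambda$ in the sense of currents requires careful regularized maxima of plurisubharmonic functions, or Catlin-style bumping. The analogous patching for $\rho$ is less delicate because the max of the local defining functions already globalizes to a Lipschitz defining function, and the Diederich-Fornaess property is well-behaved under max.
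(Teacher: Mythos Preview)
Your plan is essentially the paper's: reduce to Theorem~\ref{thm:main_theorem} by producing the global weight $\lambda$ and verifying the Diederich--Forn{\ae}ss hypothesis. The construction of $\lambda$ you sketch is exactly what Lemma~\ref{lem:finite_type_weight} carries out; the paper cites Straube \cite{Str97} for the local weights and patches them via a supremum of the form $\sup_p\bigl(c_1+c_2\lambda_p-c_3|z-p|^2\bigr)$ together with a bounded strictly plurisubharmonic exhaustion function to handle the interior, which is the concrete version of the ``regularized maximum'' you propose and resolves precisely the obstacle you flag.

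Where you diverge from the paper is in the Diederich--Forn{\ae}ss step. You propose to build $\rho$ by gluing local maxima $\rho_\alpha=\max_j r_{\alpha,j}$ while keeping $-(-\rho)^\eta$ plurisubharmonic. This is harder than you suggest: the $\rho_\alpha$ are only defined on the patches $B_\alpha$, and extending or patching them while preserving the plurisubharmonicity of $-(-\rho)^\eta$ globally is not automatic (a partition of unity destroys it, and a global max is not available since the pieces are not globally defined). The paper avoids this entirely: since $\Omega$ is already bounded Lipschitz pseudoconvex, it invokes the general result from \cite{Har08a} that every such domain has strictly positive Diederich--Forn{\ae}ss index, so the required $\rho$ and $\eta$ exist with no further construction. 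That is simpler and sidesteps your globalization issue.

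One small correction: finite type does not imply pseudoconvex, so your sentence ``every smooth finite type domain being pseudoconvex'' is not a valid step. Pseudoconvexity of the intersecting pieces is an implicit hypothesis here (Catlin's weight construction requires it), and pseudoconvexity of $\Omega$ then follows because intersections of pseudoconvex domains are pseudoconvex.
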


As suggested by the hypotheses of Corollary \ref{cor:finite_type}, our ideal result would state that the $\dbar$-Neumann operators have good $L^p$ mapping properties on finite type domains.  Unfortunately, it is not clear how to define finite type on Lipschitz domains.  Catlin's work in \cite{Cat84} and \cite{Cat87} demonstrated that on smooth domains, his finite type condition is equivalent to subelliptic estimates and the existence of a family of good weight functions.  Since Straube showed in \cite{Str97} that the existence of a family of good weight functions also implies subelliptic estimates on Lipschitz domains, these conditions seem to be the most natural substitute for finite type on Lipschitz domains. Hence, we adopt these as our hypotheses in the statement of Theorem \ref{thm:main_theorem}.  Following Catlin's work and Straube's extension to Lipschitz domains in \cite{Str97}, the finite type assumption in Corollary \ref{cor:finite_type} implies the existence of the weight functions that are needed in Theorem \ref{thm:main_theorem}, which in turn imply the subelliptic estimates that we use in the proof of Theorem \ref{thm:main_theorem}.

We remark that there are $L^p$ regularity results for solution operators (not the canonical solution operator) to the $\dbar$-problem on some more general domains, see \cite{ChL2000, Ker71, Ovr71}. In these studies, the solution operators are integral operators with a Calderon-Zygmund type kernel, and the theory of singular integral operators are employed to obtain regularity results. As a complementary result in this approach, it was noted in \cite{FoS89} that there are certain domains on which there exists no solution operator that is bounded on $L^p$ spaces for the full range of $p\in (1,\infty)$.

The authors would like to thank the referee for many helpful suggestions that have significantly improved this paper.

\section{Related Norms}

In this section, we will prove relationships between $L^p$ norms and weighted $L^2$ norms.  Closely related results were obtained in \cite{BoSi91} and \cite{FoS89}, but we will prove versions that are adapted to our methods of proof.  For a domain $\Omega\subset\mathbb{C}^n$, let $\delta(z)=\dist(z,\partial\Omega)$.  Let $\omega_{2n}$ denote the volume of the unit ball in $\mathbb{C}^n$.  A domain $\Omega\subset\mathbb{C}^n$ is said to be a Lipschitz domain if $\partial\Omega$ is locally the graph of a Lipschitz function.

The estimates in Lemmas \ref{lem:weighted_norm_bound} and \ref{lem:Lp_norm_bound} will both be actual estimates (the left-hand side is finite whenever the right-hand side is finite) as opposed to a priori estimates (the estimate only holds if we first assume that both sides are finite).  This will be immediate for Lemma \ref{lem:weighted_norm_bound} because H\"older's inequality is an actual estimate, but proving that the estimate in Lemma \ref{lem:Lp_norm_bound} is an actual estimate will require more caution.

\begin{lem}
\label{lem:weighted_volume_estimate}
  Let $\Omega\subset\mathbb{C}^n$ be a bounded Lipschitz domain.  Then there exists $C>0$ depending only on $\Omega$ such that for every $q\geq 1$ and $0\leq r<\frac{1}{q}$ we have
  \begin{equation}
  \label{eq:weighted_volume_estimate}
    \norm{\delta^{-r}}_{L^q(\Omega)}\leq C(1-rq)^{-1/q}
  \end{equation}
\end{lem}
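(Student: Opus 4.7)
My plan is a layer-cake computation combined with the standard measure estimate for boundary strips of a Lipschitz domain. The case $r=0$ is trivial (both sides reduce to constants times $|\Omega|^{1/q}$), so I assume $r>0$.

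The only non-algebraic input is the following strip estimate: because $\Omega$ is a bounded Lipschitz domain, a finite cover of $\partial\Omega$ by Lipschitz graph patches yields a constant $M=M(\Omega)$ with
\[
  |\{z \in \Omega : \delta(z) < s\}| \leq Ms \qquad \text{for all } s > 0.
\]
I take this as a standard consequence of the Lipschitz graph structure, since the $s$-tube around the graph of a Lipschitz function has volume of order $s$ times the area of the base.

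From here, Fubini (equivalently the distribution-function identity applied to $\delta^{-rq}$) gives
\[
  \int_\Omega \delta(z)^{-rq} \, dV(z) = rq \int_0^\infty s^{-rq-1} \, \bigl|\{z \in \Omega : \delta(z) < s\}\bigr| \, ds.
\]
I would split the outer integral at $s = \diam(\Omega)$, applying the strip estimate on $(0, \diam(\Omega)]$ and the trivial bound $|\Omega|$ on $(\diam(\Omega), \infty)$. Both resulting pieces evaluate explicitly (using $0 \leq rq < 1$) and sum to a quantity of the form
\[
  \frac{rq\, M \diam(\Omega)^{1-rq}}{1-rq} + |\Omega|\diam(\Omega)^{-rq}.
\]

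The last task is to bound this uniformly in $r$ and $q$. Since $rq \in [0,1)$, both $\diam(\Omega)^{1-rq}$ and $\diam(\Omega)^{-rq}$ are controlled by $\max(\diam(\Omega), \diam(\Omega)^{-1})$, and the prefactor $rq$ is at most $1$, so the displayed quantity is at most $C(\Omega)/(1-rq)$. Taking $q$-th roots and noting that $C^{1/q} \leq 1+C$ for $q \geq 1$ yields the claimed estimate. The main obstacle, such as it is, is simply to keep track of $q$-dependence so that the final constant is uniform in both $q$ and $r$; in particular, the $(1-rq)^{-1/q}$ rate already emerges directly from the integrated strip estimate, with no further interpolation needed.
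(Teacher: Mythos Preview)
Your proof is correct. The layer-cake identity, the splitting at $s=\diam(\Omega)$, and the final uniformity bookkeeping in $q$ all check out; the only geometric input you need is the strip bound $|\{z\in\Omega:\delta(z)<s\}|\le Ms$, and that is indeed a standard consequence of a finite Lipschitz-graph cover.

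Your route differs from the paper's, though the underlying geometry is the same. The paper works directly in local Lipschitz coordinates: it bounds $\delta(z)$ from below by $(1+M^2)^{-1/2}$ times the vertical distance to the graph via a cone argument, then integrates $t^{-rq}$ in the graph direction to obtain $(1-rq)^{-1}$ on each patch, and finally patches together with a finite cover. You instead package the same geometric content into the single strip estimate and extract the $(1-rq)^{-1}$ factor from the distribution-function integral. Your argument is more modular---anyone who already knows the tube-volume bound can skip straight to the calculus---while the paper's version is self-contained and makes the dependence on the Lipschitz constant $M$ completely explicit. Both approaches give the same blow-up rate with no loss.
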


\begin{proof}
  For each point in $\partial\Omega$ there exists a neighborhood $U$ with orthonormal coordinates on which
  \[
    \Omega\cap U=\set{z\in U:\im z_n<\phi(z',\re z_n)},
  \]
  where $z'=(z_1,\ldots,z_{n-1})$ and $\phi$ is a Lipschitz function with Lipschitz constant $M$.  For $z\in\Omega\cap U$, the cone
  \[
    \Gamma_z=\set{w\in U:\im w_n<\phi(z',\re z_n)-M\sqrt{\abs{w'-z'}^2+(\re z_n-\re w_n)^2}}
  \]
  satisfies $z\in\Gamma_z\subset\Omega\cap U$, so $\delta(z)\geq\dist(z,\partial\Gamma_z)$.  If we abbreviate
  \[
    x=\sqrt{\abs{w'-z'}^2+(\re z_n-\re w_n)^2},
  \]
  then
  \[
    \dist(z,\partial\Gamma_z)=\inf_{x\geq 0}\sqrt{x^2+(\im z_n-\phi(z',\re z_n)+Mx)^2},
  \]
  which is optimized when $x=\frac{M}{1+M^2}(\phi(z',\re z_n)-\im z_n)$, so
  \[
    \delta(z)\geq\dist(z,\partial\Gamma_z)=\frac{1}{\sqrt{1+M^2}}(\phi(z',\re z_n)-\im z_n).
  \]
  Therefore,
  \[
    \int_{\Omega\cap U}\delta^{-rq}dV\leq(1+M^2)^{rq/2}\int_{\Omega\cap U}(\phi(z',\re z_n)-\im z_n)^{-rq}dV.
  \]
  By shrinking $U$ as needed, we may assume $U=\tilde U\times(-\eps,\eps)$ where $\tilde U\subset\mathbb{C}^{n-1}\times\mathbb{R}$ with $\meas(\tilde U)\leq 1$ and $0<\eps\leq\frac{1}{2}$.  Thus,
  \begin{multline*}
    \int_{\Omega\cap U}\delta^{-rq}dV\leq(1+M^2)^{rq/2}\meas(\tilde U)\int_0^{2\eps}t^{-rq}dt\\
    =(1+M^2)^{rq/2}\meas(\tilde U)(1-rq)^{-1}(2\eps)^{1-rq}<(1+M^2)^{rq/2}(1-rq)^{-1}.
  \end{multline*}
  Since $\partial\Omega$ is compact, we may cover $\partial\Omega$ with finitely many such $U$.  Using the finiteness of the cover and $r<1$, we may find an upper bound independent of $q$ and $r$ for $(1+M^2)^{r/2}$.  Via a partition of unity, there must exist a constant $C>0$ such that \eqref{eq:weighted_volume_estimate} holds.
\end{proof}

\begin{lem}
\label{lem:weighted_norm_bound}
  Let $\Omega\subset\mathbb{C}^n$ be a bounded Lipschitz domain, and let $u\in L^1_{loc}(\Omega)$.  Then there exists $C>0$ such that for every $p>2$ and $0\leq s<\frac{p-2}{2p}$ we have
  \begin{equation}
  \label{eq:weighted_norm_bound_1}
    \norm{\delta^{-s}u}_{L^2(\Omega)}\leq \sqrt{C}\left(1-\frac{2s p}{p-2}\right)^{-(p-2)/{2p}}\norm{u}_{L^p(\Omega)},
  \end{equation}
  and for every $0<p<2$ and $0\leq s<\frac{2-p}{2p}$ we have
  \begin{equation}
  \label{eq:weighted_norm_bound_2}
    \norm{u}_{L^p(\Omega)}\leq C^{1/p}\left(1-\frac{2s p}{2-p}\right)^{-(2-p)/{2p}}\norm{\delta^s u}_{L^2(\Omega)}.
  \end{equation}
\end{lem}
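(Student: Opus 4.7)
The plan is to prove both inequalities by a direct application of H\"older's inequality, using Lemma \ref{lem:weighted_volume_estimate} to control the resulting integral of a negative power of $\delta$. Since H\"older's inequality is an actual (not just a priori) estimate, the same will be true of both assertions in the lemma, with no approximation argument needed.

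For \eqref{eq:weighted_norm_bound_1} in the range $p>2$, I would expand
\[
  \norm{\delta^{-s}u}_{L^2(\Omega)}^2 = \int_\Omega \delta^{-2s}\,\abs{u}^2\,dV,
\]
and apply H\"older's inequality with the conjugate pair $\bigl(\tfrac{p}{p-2},\tfrac{p}{2}\bigr)$ to obtain
\[
  \norm{\delta^{-s}u}_{L^2(\Omega)}^2 \leq \norm{\delta^{-2s}}_{L^{p/(p-2)}(\Omega)}\,\norm{u}_{L^p(\Omega)}^2.
\]
Invoking Lemma \ref{lem:weighted_volume_estimate} with $q=\tfrac{p}{p-2}\geq 1$ and $r=2s$, the hypothesis $s<\tfrac{p-2}{2p}$ is exactly the condition $rq<1$, so
\[
  \norm{\delta^{-2s}}_{L^{p/(p-2)}(\Omega)} \leq C\left(1-\tfrac{2sp}{p-2}\right)^{-(p-2)/p},
\]
and taking square roots yields \eqref{eq:weighted_norm_bound_1}.

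For \eqref{eq:weighted_norm_bound_2} in the range $0<p<2$, I would factor the integrand as $\abs{u}^p = \delta^{-sp}\,(\delta^s\abs{u})^p$ and apply H\"older's inequality with the conjugate pair $\bigl(\tfrac{2}{2-p},\tfrac{2}{p}\bigr)$:
\[
  \norm{u}_{L^p(\Omega)}^p \leq \norm{\delta^{-sp}}_{L^{2/(2-p)}(\Omega)}\,\norm{\delta^s u}_{L^2(\Omega)}^p.
\]
Applying Lemma \ref{lem:weighted_volume_estimate} with $q=\tfrac{2}{2-p}\geq 1$ and $r=sp$, the hypothesis $s<\tfrac{2-p}{2p}$ again gives $rq<1$ and
\[
  \norm{\delta^{-sp}}_{L^{2/(2-p)}(\Omega)} \leq C\left(1-\tfrac{2sp}{2-p}\right)^{-(2-p)/2};
\]
taking $p$-th roots produces \eqref{eq:weighted_norm_bound_2}. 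There is no genuine obstacle here: once the conjugate exponents in H\"older are matched to the exponent on $\delta$, the hypothesis on $s$ is precisely the integrability condition required for Lemma \ref{lem:weighted_volume_estimate}, and the constraint $q\geq 1$ is automatic from $p>2$ and $0<p<2$, respectively.
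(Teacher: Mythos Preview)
Your proposal is correct and is essentially identical to the paper's own proof: both apply H\"older's inequality with the conjugate pairs $\bigl(\tfrac{p}{p-2},\tfrac{p}{2}\bigr)$ and $\bigl(\tfrac{2}{2-p},\tfrac{2}{p}\bigr)$, respectively, and then invoke Lemma~\ref{lem:weighted_volume_estimate} to bound the resulting $\delta$-integral. The paper even makes the same remark that H\"older's inequality being an actual estimate means no a priori finiteness assumption is needed.
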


\begin{rem}
  Note that \eqref{eq:weighted_norm_bound_1} has a natural extension to the $p=\infty$ case which follows immediately from \eqref{eq:weighted_volume_estimate}, but we will not need to make use of this.
\end{rem}

\begin{proof}
  We first suppose $p>2$.  We may assume $u\in L^p(\Omega)$.  By H\"older's inequality,
  \[
    \int_\Omega\abs{u}^2\delta^{-2s}dV\leq\norm{\abs{u}^2}_{L^{p/2}(\Omega)}\norm{\delta^{-2s}}_{L^{p/(p-2)}(\Omega)}.
  \]
  Since $\frac{p}{p-2}>1$, we may use \eqref{eq:weighted_volume_estimate} to obtain \eqref{eq:weighted_norm_bound_1}.

  Turning to the case where $0<p<2$, we may assume $\delta^s u\in L^2(\Omega)$.  H\"older's inequality gives us
  \[
    \int_\Omega|u|^p dV\leq\norm{|u|^p\delta^{ps}}_{L^{2/p}(\Omega)}\norm{\delta^{-ps}}_{L^{\frac{2}{2-p}}(\Omega)}
  \]
  Since $\frac{2}{2-p}>1$, we may use \eqref{eq:weighted_volume_estimate} to obtain \eqref{eq:weighted_norm_bound_2}.
\end{proof}

\begin{lem}
\label{lem:Lp_norm_bound}
  Let $\Omega\subset\mathbb{C}^n$ be a bounded Lipschitz domain, and let $u$ be a harmonic function on $\Omega$.  Then for any $p>2$,
  \begin{equation}
  \label{eq:Lp_norm_bound_1}
    \norm{u}_{L^p(\Omega)}\leq\frac{1}{\omega_{2n}^{(p-2)/2p}}\norm{\delta^{-n(p-2)/p}u}_{L^2(\Omega)},
  \end{equation}
  and for any $1\leq p<2$,
  \begin{equation}
  \label{eq:Lp_norm_bound_2}
    \norm{\delta^{n(2-p)/p}u}_{L^2(\Omega)}\leq\frac{1}{\omega_{2n}^{(2-p)/{2p}}}\norm{u}_{L^p(\Omega)}.
  \end{equation}
\end{lem}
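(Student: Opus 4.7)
The plan is to derive a pointwise bound on $|u(z)|^p$ in terms of a weighted $L^2$ integral over the ball $B(z,\delta(z))\subset\Omega$, and then integrate to obtain the global inequality. The key input from harmonicity is the mean value identity
\[
u(z) = \frac{1}{\omega_{2n}\delta(z)^{2n}}\int_{B(z,\delta(z))} u\,dV,
\]
which, after Cauchy--Schwarz applied to the split $|u| = (|u|\delta^{-s})(\delta^{s})$ with $s = n(p-2)/p$, yields
\[
|u(z)|^2 \leq \frac{1}{(\omega_{2n}\delta(z)^{2n})^2}\Bigl(\int_{B(z,\delta(z))}|u|^2\delta^{-2s}\,dV\Bigr)\Bigl(\int_{B(z,\delta(z))}\delta^{2s}\,dV\Bigr).
\]
Since $\delta(w)\leq 2\delta(z)$ on $B(z,\delta(z))$, the factor $\int_B\delta^{2s}\,dV$ is controlled by a multiple of $\delta(z)^{2n+2s}$; raising to the $p/2$ power and using $p(s-n)=-2n$ then produces a pointwise estimate of the form $|u(z)|^p \leq C_{n,p}\,\omega_{2n}^{-p/2}\,\delta(z)^{-2n}\bigl(\int_{B(z,\delta(z))}|u|^2\delta^{-2s}\,dV\bigr)^{p/2}$.

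Next I would integrate this inequality over $\Omega$ and apply Minkowski's integral inequality, which is available because $p/2\geq 1$. This reduces matters to bounding the kernel $K(w) := \int_{\{z\in\Omega\,:\,w\in B(z,\delta(z))\}}\delta(z)^{-2n}\,dV(z)$. The key geometric fact is that when $|z-w|<\delta(z)/2$, one has $\delta(z)$ comparable to $\delta(w)$ and the corresponding set of $z$ lies in a ball around $w$ of radius comparable to $\delta(w)$, which makes $K(w)$ uniformly bounded in $w$. Combining the pointwise estimate with Minkowski and the uniform kernel bound yields $\|u\|_{L^p(\Omega)}\leq C\,\omega_{2n}^{-(p-2)/(2p)}\,\|\delta^{-s}u\|_{L^2(\Omega)}$ for some constant $C$.

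The hard part, I expect, is extracting the clean constant $\omega_{2n}^{-(p-2)/(2p)}$ with no extra multiplicative factor. A natural heuristic showing that this is the correct value is complex interpolation between the trivial identity $\|u\|_{L^2}\leq\|u\|_{L^2}$ and the pointwise bound $\|\delta^{n}u\|_{L^\infty}\leq \omega_{2n}^{-1/2}\|u\|_{L^2}$ (itself a direct consequence of the mean value together with Cauchy--Schwarz): the geometric mean of the constants $1$ and $\omega_{2n}^{-1/2}$ at interpolation parameter $\theta = (p-2)/p$ is exactly $\omega_{2n}^{-(p-2)/(2p)}$. Matching this sharp value in the direct computation requires the intermediate Cauchy--Schwarz and Fubini steps to be carried out tightly, controlling the average $\overline{\delta^{2s}}_B$ rather than the naive supremum $\sup_B\delta^{2s}$.

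The second inequality \eqref{eq:Lp_norm_bound_2} for $1\leq p<2$ is handled by an analogous argument: one uses the subharmonicity of $|u|^p$ (valid for $p\geq 1$ when $u$ is harmonic) and applies Minkowski's integral inequality at the conjugate exponent $2/p>1$; alternatively, one can deduce it from \eqref{eq:Lp_norm_bound_1} by duality. In both cases, as in the discussion preceding Lemma \ref{lem:weighted_norm_bound}, a short truncation or monotone convergence argument is needed to upgrade the bound from an a priori estimate to an actual estimate, namely to conclude that the left-hand side is finite whenever the right-hand side is.
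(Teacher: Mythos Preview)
Your Minkowski route is genuinely different from the paper's and does produce an estimate of the right shape, but only with an unspecified constant, as you yourself anticipate. One inconsistency to flag: you set up the pointwise bound with balls $B(z,\delta(z))$, which makes the kernel condition $|z-w|<\delta(z)$, yet you argue the kernel bound under the stricter hypothesis $|z-w|<\delta(z)/2$. If you use half-radius balls throughout, the comparability $\delta(z)\sim\delta(w)$ and the containment $\{z:|z-w|<\delta(z)/2\}\subset B(w,\delta(w))$ hold and $K(w)$ is bounded immediately; with full-radius balls the kernel is still bounded on a Lipschitz domain, but this requires more than the one-line comparability you invoke.

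The paper sidesteps Minkowski entirely and obtains the exact constant by a short bootstrap. From the mean-value identity and Jensen's inequality one has the pointwise bound
\[
|u(z)|\le\omega_{2n}^{-1/p}\,\delta(z)^{-2n/p}\,\|u\|_{L^p(\Omega)},
\]
and then one simply writes
\[
\|u\|_{L^p}^{\,p}=\int_\Omega |u|^{2}\cdot|u|^{p-2}\,dV
\le\omega_{2n}^{-(p-2)/p}\,\|u\|_{L^p}^{\,p-2}\,\bigl\|\delta^{-n(p-2)/p}u\bigr\|_{L^2}^{\,2}
\]
and divides by $\|u\|_{L^p}^{\,p-2}$. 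This is an a priori argument (one must already know $\|u\|_{L^p}<\infty$ to divide), so the paper first runs a cruder version on the sets $\{\delta>\eps\}$, with extra constants and Fatou's lemma, just to establish $u\in L^p$; only then does it execute the clean step above. The companion inequality \eqref{eq:Lp_norm_bound_2} is proved the same way, splitting $|u|^2=|u|^p\cdot|u|^{2-p}$; your suggested duality shortcut is not used. For the applications later in the paper the precise constant is immaterial, so your argument would suffice there; the paper's method is shorter and explains the sharp value, at the price of the preliminary a priori step.
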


\begin{rem}
  Once again, we note that \eqref{eq:Lp_norm_bound_1} has a natural generalization to the $p=\infty$ case which will follow immediately by substituting $p=2$ in \eqref{eq:mean_value_property_consequence}, but we will not need this in our main results.
\end{rem}

\begin{proof}
  For $\eps>0$, set $\Omega_\eps=\set{z\in\Omega:\delta(z)>\eps}$.  Fix $z\in\Omega_\eps$, and choose $r<\delta(z)-\eps$.  By the mean value property,
  \[
    u(z)=\frac{1}{\omega_{2n}r^{2n}}\int_{B(z,r)}u(w)dV_w.
  \]
  Since $x^p$ is a convex function when $p\geq 1$ and $x\geq 0$, we may use Jensen's inequality to obtain
  \begin{equation}
  \label{eq:Jensens}
    \abs{u(z)}^p\leq\frac{1}{\omega_{2n} r^{2n}}\int_{B(z,r)}|u(w)|^p dV_w.
  \end{equation}
  Since this holds for every $r<\delta(z)-\eps$, we must have
  \begin{equation}
  \label{eq:mean_value_property_consequence}
    \abs{u(z)}\leq\frac{1}{\omega_{2n}^{1/p} (\delta(z)-\eps)^{2n/p}}\norm{u}_{L^p(\Omega_\eps)}
  \end{equation}
  for any $z\in\Omega_\eps$.

 To prove \eqref{eq:Lp_norm_bound_1}, we first fix $p>2$.  Let $s=\frac{n(p-2)}{p}>0$.  For $z\in\Omega_\eps$ and $r<\delta(z)-\eps$, we note that $\delta(w)\leq\delta(z)+r$ for $w\in B(z,r)$, so \eqref{eq:Jensens} gives us
  \[
    \abs{u(z)}^2\leq\frac{1}{\omega_{2n} r^{2n}}\int_{B(z,r)}\left(\frac{\delta(z)+r}{\delta(w)}\right)^{2s}|u(w)|^2 dV_w\leq\frac{(\delta(z)+r)^{2s}}{\omega_{2n} r^{2n}}\norm{\delta^{-s}u}^2_{L^2(\Omega_\eps)}.
  \]
  Since this holds for every $r<\delta(z)-\eps$, we must have
  \[
    \abs{u(z)}\leq\frac{(2\delta(z)-\eps)^s}{\omega_{2n}^{1/2} (\delta(z)-\eps)^n}\norm{\delta^{-s}u}_{L^2(\Omega_\eps)}
  \]
  for any $z\in\Omega_\eps$.  We may raise this to the power of $p-2$ and substitute as follows:
  \begin{multline*}
    \norm{u}_{L^p(\Omega_{2\eps})}^p=\int_{\Omega_{2\eps}}\abs{(\delta(z))^{-s}u(z)}^2(\delta(z))^{2s}\abs{u(z)}^{p-2}dV\\
    \leq\int_{\Omega_{2\eps}}\abs{(\delta(z))^{-s}u(z)}^2\frac{(\delta(z))^{2s}(2\delta(z)-\eps)^{s(p-2)}}{\omega_{2n}^{(p-2)/2} (\delta(z)-\eps)^{n(p-2)}}\norm{\delta^{-s}u}^{p-2}_{L^2(\Omega_\eps)}dV.
  \end{multline*}
  Using elementary calculus, one can check that $\frac{\delta^{2}(2\delta-\eps)^{p-2}}{(\delta-\eps)^p}$ is a decreasing function with respect to $\delta$ when $\delta>\eps$, and hence $\frac{(\delta(z))^{2}(2\delta(z)-\eps)^{p-2}}{(\delta(z)-\eps)^p}\leq4\cdot 3^{p-2}$ on $\Omega_{2\eps}$.  Since $ps=n(p-2)$ by definition, we have
  \[
    \norm{u}_{L^p(\Omega_{2\eps})}^p
    \leq \frac{4^s 3^{s(p-2)}}{\omega_{2n}^{(p-2)/2}}\norm{\delta^{-s}u}^2_{L^2(\Omega_{2\eps})}\norm{\delta^{-s}u}^{p-2}_{L^2(\Omega_\eps)}.
  \]
  If $\norm{\delta^{-s}u}_{L^2(\Omega)}=\infty$ then \eqref{eq:Lp_norm_bound_1} is trivial, so we assume this quantity is finite.  Then
  \[
    \norm{\delta^{-s}u}^p_{L^2(\Omega\backslash\Omega_\eps)}\rightarrow 0\text{ and }\norm{\delta^{-s}u}^p_{L^2(\Omega\backslash\Omega_{2\eps})}\rightarrow 0,
  \]
  so
  \[
    \norm{\delta^{-s}u}_{L^2(\Omega_\eps)}\rightarrow\norm{\delta^{-s}u}_{L^2(\Omega)}\text{ and }\norm{\delta^{-s}u}_{L^2(\Omega_{2\eps})}\rightarrow\norm{\delta^{-s}u}_{L^2(\Omega)}.
  \]
  If we define $u_\eps$ to equal $u$ on $\Omega_{2\eps}$ and $0$ on $\Omega\backslash\Omega_{2\eps}$, then we have
  \[
    \norm{u_\eps}_{L^p(\Omega)}
    \leq \frac{4^{s/p} 3^{s(p-2)/p}}{\omega_{2n}^{(p-2)/{2p}}}\norm{\delta^{-s}u}^{2/p}_{L^2(\Omega_{2\eps})}\norm{\delta^{-s}u}^{(p-2)/p}_{L^2(\Omega_\eps)}.
  \]
  so we may use Fatou's lemma to conclude $u\in L^p(\Omega)$.

  Now that we know that $u\in L^p(\Omega)$ whenever $\norm{\delta^{-n(p-2)/p}u}_{L^2(\Omega)}<\infty$, we may show that \eqref{eq:mean_value_property_consequence} holds with $\eps=0$.  We next raise \eqref{eq:mean_value_property_consequence} to the power of $p-2$ and substitute in the following inequality:
  \[
    \norm{u}_{L^p(\Omega)}^p=\int_{\Omega}\abs{u}^2\cdot\abs{u}^{p-2}dV\leq\frac{1}{\omega_{2n}^{(p-2)/p}}\norm{u}_{L^p(\Omega)}^{p-2}\norm{\delta^{-n(p-2)/p}u}_{L^2(\Omega)}^2.
  \]
  Dividing by $\norm{u}_{L^p(\Omega)}^{p-2}$ (which we now know to be finite) and taking a square root will give us \eqref{eq:Lp_norm_bound_1}.

  To prove \eqref{eq:Lp_norm_bound_2}, we fix $1\leq p<2$.  This time, we raise \eqref{eq:mean_value_property_consequence} to the power of $2-p$ and substitute to obtain
  \begin{multline*}
    \norm{u(\delta-\eps)^{n(2-p)/p}}_{L^2(\Omega_\eps)}^2=\int_{\Omega_\eps}\abs{u}^p\cdot\abs{u}^{2-p}(\delta-\eps)^{2n(2-p)/p}dV\\
    \leq\frac{1}{\omega_{2n}^{(2-p)/p}}\norm{u}_{L^p(\Omega_\eps)}^2.
  \end{multline*}
  Once again, the result is trivial if $u\notin L^p(\Omega)$, and the right-hand side converges when $u\in L^p(\Omega)$, so Fatou's lemma may be used to prove \eqref{eq:Lp_norm_bound_2}.
\end{proof}

\section{Good Weight Functions}

The connection between finite type and good weight functions was first observed by Catlin in \cite{Cat84} and \cite{Cat87}.  Straube observed that Catlin's result could be used to construct useful weight functions on certain Lipschitz domains in \cite{Str97}.  Although finite type is essentially a local condition, we will need good global weight functions, so we begin by showing that the functions constructed by Catlin and Straube can be patched together.

\begin{lem}
\label{lem:finite_type_weight}
  Let $\Omega\subset\mathbb{C}^n$ be a bounded Lipschitz pseudoconvex domain that is locally a transverse intersection of smooth finite type domains.  Then there exist constants $\frac{1}{2}\geq\eps>0$ and $C>0$ and a function $\lambda$ such that
  \[
    i\ddbar\lambda\geq C(\delta(z))^{-2\eps}i\ddbar\abs{z}^2
  \]
  on $\Omega$ in the sense of currents and $0\leq\lambda\leq 1$ on $\Omega$.
\end{lem}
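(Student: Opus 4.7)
The plan is to patch together local Catlin-type weight functions using the transverse intersection hypothesis, following the strategy of Straube in \cite{Str97}. First I would choose a finite open cover $\{U_j\}_{j=1}^N$ of $\partial\Omega$, together with slightly shrunken open sets $V_j\subset\subset U_j$ still covering $\partial\Omega$, such that on each $U_j$ the hypothesis provides smooth pseudoconvex finite type domains $D_{j,1},\ldots,D_{j,m_j}$ intersecting transversely along $\partial\Omega\cap U_j$, with $\Omega\cap U_j=D_{j,1}\cap\cdots\cap D_{j,m_j}\cap U_j$. Catlin's theorem in \cite{Cat87} then yields, for each $D_{j,k}$, a smooth plurisubharmonic function $\psi_{j,k}$ with $0\leq\psi_{j,k}\leq 1$ and $i\ddbar\psi_{j,k}\geq C_{j,k}(\delta_{D_{j,k}}(z))^{-2\eps_{j,k}}i\ddbar\abs{z}^2$ near $\partial D_{j,k}$; taking $\eps=\min_{j,k}\eps_{j,k}$ produces a uniform exponent.

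Second, I would use the transversality of $\partial D_{j,1},\ldots,\partial D_{j,m_j}$ inside $U_j$ to obtain the elementary comparison $\delta(z)\leq C\min_k\delta_{D_{j,k}}(z)$ on $V_j\cap\Omega$. Consequently, the local sum $\psi_j:=\tfrac{1}{m_j}\sum_k\psi_{j,k}$ satisfies $i\ddbar\psi_j\geq C'(\delta(z))^{-2\eps}i\ddbar\abs{z}^2$ on $V_j\cap\Omega$. This yields a family of local weights, each carrying the correct Hessian bound in terms of the global distance function $\delta$, and each defined (as an element of a larger smooth domain) on a neighborhood of its patch.

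Third, I would globalize via Demailly's regularized maximum combined with a background plurisubharmonic function $\tau\abs{z}^2$ for small $\tau>0$. After calibrating affine shifts $A_j$ and a constant $R$ so that $\psi_j-A_j$ strictly dominates $\tau(\abs{z}^2-R)$ throughout $V_j$ and is strictly dominated by it outside $U_j$, the regularized maximum $\lambda_0=\widetilde{\max}(\psi_1-A_1,\ldots,\psi_N-A_N,\tau(\abs{z}^2-R))$ is smooth and plurisubharmonic, and it inherits the Hessian bound of whichever argument is dominant. Near each boundary patch the dominant argument is some $\psi_j-A_j$, so the degenerate Catlin bound is preserved; in the interior, where $\delta(z)$ is bounded below, the bound $\tau i\ddbar\abs{z}^2$ is already of the required form since $(\delta(z))^{-2\eps}$ is then comparable to a constant. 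A final affine rescaling produces $\lambda$ with $0\leq\lambda\leq 1$.

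The main obstacle is this final calibration step: the constants $A_j$, $R$, and $\tau$ must be chosen so that on every overlap $V_i\cap V_j$ the regularized maximum reduces to one of the $\psi_k-A_k$ (inheriting its degenerate lower bound) while the global normalization $0\leq\lambda\leq 1$ is preserved. This is largely bookkeeping, but nontrivial because the smoothed maximum can introduce interpolation regions where several arguments are nearly equal; there one must verify the Hessian lower bound survives, which works because each $\psi_j$ is plurisubharmonic and all the local bounds are lower bounds by the common form $i\ddbar\abs{z}^2$, so convex combinations still satisfy the bound dictated by the argument with the largest coefficient of $(\delta(z))^{-2\eps}$.
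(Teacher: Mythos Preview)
Your overall architecture matches the paper's: obtain local Catlin--Straube weights near the boundary, patch them by a max-type construction, and fill in the interior with a strictly plurisubharmonic background. The paper cites Straube \cite{Str97} for the local weight and then takes a pointwise supremum of the local weights with a quadratic penalty $-c_p|z-p|^2$ attached to each; for the interior it uses Demailly's bounded strictly plurisubharmonic exhaustion $\mu$ (which vanishes on $\partial\Omega$) rather than $\tau|z|^2$.

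Two concrete issues. First, your distance comparison is written in the wrong direction: from $\delta(z)\leq C\min_k\delta_{D_{j,k}}(z)$ one gets only an \emph{upper} bound on $\sum_k(\delta_{D_{j,k}})^{-2\eps}$ in terms of $\delta^{-2\eps}$, not the lower bound you need. The inequality you actually require is $\min_k\delta_{D_{j,k}}(z)\leq\delta(z)$, which in fact holds without transversality: the nearest point of $\partial\Omega$ to $z$ lies on some $\partial D_{j,k_0}$, so $\delta_{D_{j,k_0}}(z)\leq\delta(z)$. This is a harmless slip.

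Second, and more substantively, your patching step has a real gap. You propose to arrange, by affine shifts $A_j$ alone, that $\psi_j-A_j$ dominates $\tau(|z|^2-R)$ on $V_j$ but is dominated by it near $\partial U_j$. Since each $\psi_j$ lies in $[0,1]$ and $\tau(|z|^2-R)$ is likewise bounded with no particular behavior distinguishing $V_j$ from $\partial U_j$, no choice of constants can force both inequalities simultaneously; there is no spatial localization mechanism. The paper supplies exactly this mechanism by subtracting $c_p|z-p|^2$ from each local weight, which forces the modified weight to fall below the others once $|z-p|$ exceeds $r_p/2$ while leaving it dominant on $B(p,r_p/3)$. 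Equivalently, using an exhaustion function $\mu$ with $\mu\to 0$ on $\partial\Omega$ for the interior piece (as the paper does) gives the background a natural decay toward the boundary, so the local weights automatically win there. Either device repairs your argument, but the ``largely bookkeeping'' calibration you describe is not achievable with affine shifts and a flat quadratic background alone.
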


\begin{proof}
  For each point $p\in\partial\Omega$, there exists a neighborhood $B(p,r_p)$ of $p$ such that the result holds on $\Omega\cap B(p,r_p)$ for constants $\eps_p$ and $C_p$ and a function $\lambda_p$ by an argument of Straube (see the proof of Theorem 2 in \cite{Str97}).  Without loss of generality, we may assume $0\leq\lambda_p\leq 1$ on $B(p,r_p)$.

  Since $\partial\Omega$ is compact, there is a finite set $\mathcal{P}\subset\partial\Omega$ such that $B(p,r_p/3)$ covers $\partial\Omega$ for $p\in\mathcal{P}$.  Fix $\delta_1>0$ so that $\{B(p,r_p/3)\}_{p\in\mathcal{P}}$ covers $K_1=\{z\in\overline\Omega:\delta(z)\leq\delta_1\}$.  On $K_1$, let
  \[
    \lambda_1(z)=\sup_{\{p\in\mathcal{P}:z\in B(p,r_p)\}}\frac{9}{14}+\frac{5}{14}\lambda_p-\frac{18}{7} r_p^{-2}\abs{z-p}^2.
  \]
  Note that when $z\in B(p,r_p/3)$,
  \[
    \frac{9}{14}+\frac{5}{14}\lambda_p-\frac{18}{7} r_p^{-2}\abs{z-p}^2\geq\frac{9}{14}-\frac{2}{7}=\frac{5}{14}
  \]
  and when $z\notin B(p,r_p/2)$,
  \[
    \frac{9}{14}+\frac{5}{14}\lambda_p-\frac{18}{7} r_p^{-2}\abs{z-p}^2\leq\frac{5}{14},
  \]
  so the supremum must be obtained for some $p\in\mathcal{P}$ satisfying $z\in B(p,r_p/2)$.  Hence $\lambda_1$ is continuous on $K_1$ and satisfies $0\leq\lambda\leq 1$ on $K_1$.

  In the sense of currents, we have
  \[
    i\ddbar\left(\frac{9}{14}+\frac{5}{14}\lambda_p-\frac{18}{7} r_p^{-2}\abs{z-p}^2\right)\geq\left(\frac{5}{14}C_p(\delta(z))^{-2\eps_p}-\frac{18}{7} r_p^{-2}\right)i\ddbar\abs{z}^2
  \]
  on $B(p,r_p)\cap\Omega$.  If we set $\eps=\inf_{p\in\mathcal{P}}\eps_p$, choose $\delta_2<\inf_{p\in\mathcal{P}}\left(\frac{5}{36}C_p r_p^{2}\right)^{1/(2\eps)}$, and set $C_1=\inf_{p\in\mathcal{P}}\left(\frac{5}{14}C_p-\frac{18}{7} r_p^{-2}(\delta_2)^{2\eps}\right)$, then \[
    i\ddbar\left(\frac{9}{14}+\frac{5}{14}\lambda_p-\frac{18}{7} r_p^{-2}\abs{z-p}^2\right)\geq C_1(\delta(z))^{-2\eps}i\ddbar\abs{z}^2.
  \]
  in the sense of currents on $B(p,r_p)\cap K_2$, where $K_2=\{z\in K_1:\delta(z)\leq\delta_2\}$.  We may conclude that
  \[
    i\ddbar\lambda_1\geq C_1(\delta(z))^{-2\eps_p}i\ddbar\abs{z}^2
  \]
  on $K_2$.

  To extend into the interior, we let $\mu$ be a bounded strictly plurisubharmonic exhaustion function for $\Omega$.  Such a function exists by a result of Demailly \cite{Dem87} (see also \cite{Har08a} for a refinement of this result).  After rescaling, we may assume $\mu=0$ on $\partial\Omega$ and $-1=\sup_{\Omega\backslash K_2}\mu$.  Set $A=-\inf_{K_2}\mu(z)$, $B=-\inf_{\overline\Omega}\mu(z)$, and choose constants $(A+1)^{-1}<D<\min\{A^{-1},1\}$, $1-D<F<AD$, and $0<E<\min\{D+F-1,B^{-1}F\}$.  Define
  \[
    \lambda(z)=\begin{cases}\max\{(1-AD)\lambda_1(z)+D\mu(z)+AD,E\mu(z)+F\}&z\in K_2\\E\mu(z)+F&z\in\Omega\backslash K_2\end{cases}.
  \]
  On $\partial\Omega$, $(1-AD)\lambda_1(z)+D\mu(z)+AD\geq AD$ and $E\mu(z)+F\leq F$, so since $AD>F$,
  \[
    i\ddbar\lambda\geq (1-AD)C_1(\delta(z))^{-2\eps_p}i\ddbar\abs{z}^2
  \]
  on some interior neighborhood of $\partial\Omega$ in the sense of currents.  When $\delta(z)=\delta_2$, $(1-AD)\lambda_1(z)+D\mu(z)+AD<1+D\mu(z)$.  Note that since $1>AD>F$ and $D+F-1>E$, we must have $D-E>1-F>0$.  Since $\mu(z)\leq -1$ when $\delta(z)=\delta_2$ by the maximum principle, $1-D<F-E$, and $D>E$, we must have $1+D\mu(z)<E\mu(z)+F$ at such points, and hence $\lambda(z)=E\mu(z)+F$ in a neighborhood of $\Omega\backslash K_2$.  From this we conclude that $\lambda$ is continuous on $\Omega$.  On $K_2$, we note that $(1-AD)\lambda_1(z)+D\mu(z)+AD\leq 1$ and $(1-AD)\lambda_1(z)+D\mu(z)+AD\geq 0$, while on $\Omega$ we have $E\mu(z)+F\leq F<1$ and $E\mu(z)+F\geq -EB+F>0$, so $0\leq\lambda\leq 1$ on $\Omega$.  Since $\mu$ is strictly plurisubharmonic, we may find $C>0$ so that our conclusion holds.
\end{proof}

\begin{defn}
  Let $\Omega\subset\mathbb{C}^n$ be a Lipschitz pseudoconvex domain.  The Diederich-Fornaess Index of $\Omega$ is defined to be the supremum over all exponents $0<\eta<1$ such that there exists a defining function $\rho$ for $\Omega$ such that $-(-\rho)^\eta$ is plurisubharmonic on $\Omega$. We denote this $DF(\Omega)$.
\end{defn}

The Diederich-Fornaess Index for a Lipschitz pseudoconvex domain is strictly positive, as shown in \cite{Har08a}.  On a $C^2$ domain admitting a family of good weight functions such as those constructed in \cite{Cat84}, it is known that the Diederich-Fornaess Index is equal to one (see the opening paragraph of Section 5 in \cite{KLP16}).  The following result is a slight simplification of a result first proved in \cite{Har07b}.

\begin{lem}
\label{lem:diederich_fornaess_weight}
  Let $\Omega\subset\mathbb{C}^n$ be a bounded Lipschitz pseudoconvex domain and suppose that there exist constants $0<\eps\leq\frac{1}{2}$ and $C>0$ and a continuous function $\lambda$ satisfying $0\leq\lambda\leq 1$ and
  \[
    i\ddbar\lambda\geq iC(\delta(z))^{-2\eps}\ddbar\abs{z}^2
  \]
  on $\Omega$ in the sense of currents.  Then for every $0\leq\eta<DF(\Omega)$ there exists a continuous function $\mu$ such that $k_\eta(\delta(z))^\eta\geq-\mu(z)\geq (\delta(z))^\eta$ on $\Omega$ for some $k_\eta>1$ and
  \begin{equation}
  \label{eq:mu_hessian}
    i\ddbar\mu(z)\geq i\frac{(DF(\Omega)-\eta)C}{2 e DF(\Omega)}(-\mu(z))(\delta(z))^{-2\eps}\ddbar\abs{z}^2
  \end{equation}
  on $\Omega$ in the sense of currents.  Furthermore, $k_\eta$ is uniformly bounded on any compact subset of $[0,DF(\Omega))$.
\end{lem}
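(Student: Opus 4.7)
The plan is to take $\tilde\eta$ with $\eta<\tilde\eta<DF(\Omega)$ and a defining function $\rho$ for which $-(-\rho)^{\tilde\eta}$ is plurisubharmonic, and to define
\[
  \mu=-(-\rho)^{\eta}e^{-c\lambda}
\]
for a positive constant $c$ to be chosen.  After normalizing $\rho$ so that $-\rho$ is comparable to $\delta$, continuity of $\mu$ is immediate and the sandwich $k_{\eta}(\delta(z))^{\eta}\geq -\mu\geq (\delta(z))^{\eta}$ follows from $e^{-c}(-\rho)^{\eta}\leq -\mu\leq (-\rho)^{\eta}$; the constant $k_\eta$ depends continuously on $\eta$ and $c$ and hence will be uniformly bounded on compact subsets of $[0,DF(\Omega))$ once $c$ is chosen continuously in $\eta$.

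Writing $f=\eta\ln(-\rho)-c\lambda$ so that $\mu=-e^{f}$, the identity $i\ddbar\mu=\mu\bigl(i\ddbar f+i\partial f\wedge\dbar f\bigr)$ reduces the desired inequality to
\[
  i\ddbar f+i\partial f\wedge\dbar f\leq -\tfrac{(DF(\Omega)-\eta)C}{2eDF(\Omega)}(\delta(z))^{-2\eps}i\ddbar|z|^{2}
\]
(the sign flips because $\mu<0$).  I would first extract from $-(-\rho)^{\tilde\eta}$ plurisubharmonic the Oka-type bound $i\ddbar\rho\geq\frac{1-\tilde\eta}{\rho}i\partial\rho\wedge\dbar\rho$, which yields $\eta\,i\ddbar\ln(-\rho)\leq-\eta\tilde\eta(-\rho)^{-2}i\partial\rho\wedge\dbar\rho$, and combine with the expansion of $i\partial f\wedge\dbar f$ to obtain
\[
  i\ddbar f+i\partial f\wedge\dbar f\leq-\eta(\tilde\eta-\eta)\tfrac{i\partial\rho\wedge\dbar\rho}{\rho^{2}}-c\,i\ddbar\lambda+\tfrac{c\eta}{-\rho}\bigl(i\partial\rho\wedge\dbar\lambda+i\partial\lambda\wedge\dbar\rho\bigr)+c^{2}i\partial\lambda\wedge\dbar\lambda.
\]

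Next I would apply the Cauchy-Schwarz inequality $\pm(iA\wedge\bar B+iB\wedge\bar A)\leq t\,iA\wedge\bar A+t^{-1}iB\wedge\bar B$ with $t=(\tilde\eta-\eta)/(2c(-\rho))$ to the mixed term, which absorbs half of the $i\partial\rho\wedge\dbar\rho/\rho^{2}$ budget and leaves
\[
  i\ddbar f+i\partial f\wedge\dbar f\leq -\tfrac{\eta(\tilde\eta-\eta)}{2}\tfrac{i\partial\rho\wedge\dbar\rho}{\rho^{2}}-c\,i\ddbar\lambda+\tfrac{c^{2}(\tilde\eta+\eta)}{\tilde\eta-\eta}i\partial\lambda\wedge\dbar\lambda.
\]
The decisive negative contribution then comes from $-c\,i\ddbar\lambda\leq -cC(\delta(z))^{-2\eps}i\ddbar|z|^{2}$, supplied by the hypothesis on $\lambda$.

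The hard part will be disposing of the parasitic $i\partial\lambda\wedge\dbar\lambda$ term, since the hypotheses do not \emph{a priori} bound $|\partial\lambda|^{2}$ pointwise.  I expect this to be handled as in Harrington's original construction in \cite{Har07b}: by working first with smooth plurisubharmonic regularizations of $\lambda$, choosing $c$ small enough relative to $\tilde\eta-\eta$ that the coefficient $\tfrac{c^{2}(\tilde\eta+\eta)}{\tilde\eta-\eta}$ can be absorbed using both the non-negative leftover $\tfrac{\eta(\tilde\eta-\eta)}{2}i\partial\rho\wedge\dbar\rho/\rho^{2}$ and the dominant $cC(\delta(z))^{-2\eps}i\ddbar|z|^{2}$, and then passing to the limit in the sense of currents.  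The factor $1/(2e)$ in the stated constant should emerge from optimizing the choice of $c$, and letting $\tilde\eta\to DF(\Omega)^{-}$ replaces $\tilde\eta$ by $DF(\Omega)$ in the final expression.
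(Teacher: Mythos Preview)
Your ansatz $\mu=-(-\rho)^{\eta}e^{-c\lambda}$ is almost what the paper uses, but the ``hard part'' you flag is a genuine gap, and your proposed fix does not work.  The hypotheses give no control whatsoever on $|\partial\lambda|^{2}$: $\lambda$ is merely continuous and plurisubharmonic, and regularizing by convolution does not create gradient bounds.  The leftover term $\tfrac{\eta(\tilde\eta-\eta)}{2}\rho^{-2}i\partial\rho\wedge\dbar\rho$ is rank one in the direction of $\partial\rho$ and cannot absorb $i\partial\lambda\wedge\dbar\lambda$ in other directions, while absorbing $c^{2}i\partial\lambda\wedge\dbar\lambda$ into $cC\delta^{-2\eps}i\ddbar|z|^{2}$ would require $c|\partial\lambda|^{2}\lesssim C\delta^{-2\eps}$ pointwise, which you simply do not have.

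The paper's remedy is to replace the linear occurrence of $\lambda$ in the exponent by an \emph{exponential} one: it sets
\[
  \mu=-(-r)^{\eta}\exp\!\Bigl(\tfrac{s-\eta}{s}\bigl(1-e^{\lambda-1}\bigr)\Bigr),\qquad \eta<s<DF(\Omega).
\]
The point is that differentiating $e^{\lambda-1}$ produces, via the chain rule, an additional $i\partial\lambda\wedge\dbar\lambda$ term with the \emph{correct} sign.  After expanding $i(-\mu)^{-1}\ddbar\mu$ one finds that the coefficient of $i\partial\lambda\wedge\dbar\lambda$ is $\tfrac{s-\eta}{s}e^{\lambda-1}\bigl(1-\tfrac{s-\eta}{s}e^{\lambda-1}\bigr)>0$, and a direct discriminant computation (using $0\leq\lambda\leq 1$, hence $e^{\lambda-1}\leq 1$) shows that the full quadratic form in $\partial r$ and $\partial\lambda$ is positive semi-definite.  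It can therefore be dropped outright, leaving $i(-\mu)^{-1}\ddbar\mu\geq \tfrac{s-\eta}{s}e^{\lambda-1}i\ddbar\lambda\geq \tfrac{s-\eta}{s}e^{-1}C\delta^{-2\eps}i\ddbar|z|^{2}$, which after the choice $s=\tfrac{2\eta\,DF(\Omega)}{DF(\Omega)+\eta}$ gives exactly the constant $\tfrac{(DF(\Omega)-\eta)C}{2e\,DF(\Omega)}$.  This double-exponential trick is the missing idea; once you insert it, the rest of your outline (normalization of $\rho$, the sandwich on $-\mu$, regularization to justify the current inequalities) goes through as you describe.
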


\begin{rem}
  We have chosen to work with a bounded weight $\lambda$ because the weight obtained from Catlin and Straube's work will be bounded, but it is worth noting that the following proof is easily adapted to work with weights of self-bounded gradient.  See \cite{Her07} for a discussion of such weights in the context of subelliptic estimates.
\end{rem}

\begin{proof}
  Fix $\eta<s<DF(\Omega)$ and let $r$ be a defining function for $\Omega$ such that $-(-r)^s$ is plurisubharmonic on $\Omega$.  By rescaling we may assume $-r(z)\geq\delta(z)$.  We set
  \[
    \mu(z)=-(-r(z))^\eta\exp\left(\frac{s-\eta}{s}(1-e^{\lambda-1})\right).
  \]
  If $-r\leq k\delta$ for some $k>1$, then
  \[
    (\delta(z))^\eta\leq-\mu(z)\leq k^\eta(\delta(z))^\eta\exp\left(\frac{s-\eta}{s}(1-e^{-1})\right).
  \]
  Since $\frac{s-\eta}{s}<1$, we can take $k_\eta\leq k^\eta\exp(1-e^{-1})$, which is uniformly bounded for all $\eta<s$.  If we restrict to a compact subset of $\Omega$, we may regularize $-(-r)^s$ and $\lambda$ by convolution to obtain smooth functions with the same plurisubharmonicity properties.  Hence we may carry out the following computation under the assumption that $r$ and $\lambda$ are smooth, with the understanding that we will ultimately take limits to recover the necessary information about $\mu$.

  We first note that $-(-r)^s$ is plurisubharmonic if and only if
  \begin{equation}
  \label{eq:diederich_fornaess_exponent}
    i\ddbar r\geq -i(1-s)(-r)^{-1}\partial r\wedge\dbar r.
  \end{equation}
  To facilitate derivatives of $\mu$, we first take $-\log(-\mu)$ and compute
  \begin{equation}
  \label{eq:mu_derivative}
    (-\mu)^{-1}\dbar\mu=\eta(-r)^{-1}\dbar r+\frac{s-\eta}{s}e^{\lambda-1}\dbar\lambda
  \end{equation}
  and
  \begin{multline*}
    i(-\mu)^{-1}\ddbar\mu+i(-\mu)^{-2}\partial\mu\wedge\dbar\mu=i\eta(-r)^{-1}\ddbar r+i\eta(-r)^{-2}\partial r\wedge\dbar r\\
    +i\frac{s-\eta}{s}e^{\lambda-1}\ddbar\lambda+i\frac{s-\eta}{s}e^{\lambda-1}\partial\lambda\wedge\dbar\lambda.
  \end{multline*}
  Substituting \eqref{eq:diederich_fornaess_exponent} and \eqref{eq:mu_derivative}, we may simplify to obtain
  \begin{multline*}
    i(-\mu)^{-1}\ddbar\mu \geq  i\eta (s-\eta)(-r)^{-2}\partial r\wedge\dbar r-i\eta(-r)^{-1}\frac{s-\eta}{s}e^{\lambda-1}(\partial r\wedge\dbar\lambda+\partial\lambda\wedge\dbar r)\\
    i\frac{s-\eta}{s}e^{\lambda-1}\ddbar\lambda+i\frac{s-\eta}{s}e^{\lambda-1}\left(1-\frac{s-\eta}{s}e^{\lambda-1}\right)\partial\lambda\wedge\dbar\lambda.
  \end{multline*}
  Since
  \begin{multline*}
    \eta (s-\eta)(-r)^{-2}\frac{s-\eta}{s}e^{\lambda-1}\left(1-\frac{s-\eta}{s}e^{\lambda-1}\right)-\left(\eta(-r)^{-1}\frac{s-\eta}{s}e^{\lambda-1}\right)^2 =\\
    \eta (s-\eta)(-r)^{-2}\frac{s-\eta}{s}e^{\lambda-1}\left(1-e^{\lambda-1}\right)\geq0,
  \end{multline*}
  the terms involving $\partial r$ and $\partial\lambda$ must add up to a positive semi-definite form, so
  \[
    i(-\mu)^{-1}\ddbar\mu\geq i\frac{s-\eta}{s}e^{\lambda-1}\ddbar\lambda.
  \]
  If we set $s=\frac{2\eta DF(\Omega)}{DF(\Omega)+\eta}$, then $\frac{s-\eta}{s}=\frac{DF(\Omega)-\eta}{2DF(\Omega)}$, so our hypothesis on $\lambda$ can be use to obtain \eqref{eq:mu_hessian}.

\end{proof}

\section{The Basic Estimate}

We first recall a key result from Berndtsson and Charpentier \cite{BeCh00}:
\begin{thm}
\label{thm:BeCh00}
  Let $\Omega\subset\mathbb{C}^n$ be a bounded Lipschitz pseudoconvex domain.  Then the Bergman Projection $P_q$ is continuous in $L^2_{(0,q)}(\Omega,\delta^{-2s})$ for any $0\leq s<\frac{DF(\Omega)}{2}$.
\end{thm}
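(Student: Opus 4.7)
The plan is to establish a weighted Morrey--Kohn--H\"ormander basic estimate on $L^2_{(0,q)}(\Omega,(-\rho)^{-2s})$ and then to deduce the weighted continuity of $P_q$ via Kohn's identity
\[
  P_q=I-\dbarstar N_{q+1}\dbar-\dbar N_{q-1}\dbarstar
\]
(the second term being absent when $q=0$). Since $-\rho\sim\delta$ near $\partial\Omega$, working with the weight $(-\rho)^{-2s}$ in place of $\delta^{-2s}$ is equivalent. Fix $\eta$ with $2s<\eta<DF(\Omega)$ and let $\rho$ be a defining function for $\Omega$ such that $-(-\rho)^\eta$ is plurisubharmonic on $\Omega$; the inequality
\[
  i\ddbar\rho\geq(\eta-1)(-\rho)^{-1}i\partial\rho\wedge\dbar\rho
\]
in the sense of currents is the algebraic engine of the argument.

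The central step is the weighted basic estimate
\[
  \int_\Omega\abs{u}^2(-\rho)^{-2s}\,dV\ \lesssim\ \int_\Omega\bigl(\abs{\dbar u}^2+\abs{\dbarstar u}^2\bigr)(-\rho)^{-2s}\,dV
\]
for every $u\in\dom(\dbar)\cap\dom(\dbarstar)$ in the orthogonal complement of $\ker\Box_q$. I would obtain this from a twisted Morrey--Kohn--H\"ormander identity with a plurisubharmonic weight $\psi$ and a positive twist $\tau$, chosen so that $e^{-\psi}$ is comparable with $(-\rho)^{-2s}$ up to bounded factors. A natural choice, modeled on Lemma~\ref{lem:diederich_fornaess_weight}, is $\tau\sim(-\rho)^\eta$ together with $\psi=2s\log(-\rho)+a\bigl(1-e^{-b(-\rho)^\eta}\bigr)$ for suitable constants $a,b>0$; the second term of $\psi$ contributes a full-dimensional positive piece to $i\ddbar\psi$, so that after the Diederich--Fornaess inequality is invoked to dominate the cross terms involving $\partial\rho$, and the strict inequality $2s<\eta$ is used (together with Cauchy--Schwarz) to absorb the $\partial\psi\wedge\dbar\psi$ contribution, the twisted combination $\tau\,i\ddbar\psi-i\ddbar\tau$ exceeds $c\,(-\rho)^{-2s}\,i\ddbar\abs{z}^2$ in the sense of currents. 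The Morrey--Kohn boundary term is non-negative by pseudoconvexity and is simply discarded.

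Once the weighted basic estimate is in hand, a standard Lax--Milgram argument gives continuity of $N_q$ on $L^2_{(0,q)}(\Omega,(-\rho)^{-2s})$, from which the weighted boundedness of both $\dbarstar N_{q+1}\dbar$ and $\dbar N_{q-1}\dbarstar$ follows, and hence the desired continuity of $P_q$. The principal obstacle is that $\rho$ is only Lipschitz, so neither the pointwise curvature inequalities nor the integration by parts underlying the Morrey--Kohn identity may be applied directly on $\Omega$. I would circumvent this by regularizing $\rho$ via convolution to obtain smoothly bounded pseudoconvex subdomains $\Omega_\eps\Subset\Omega$ on which $-(-\rho_\eps)^\eta$ remains plurisubharmonic, proving the weighted estimate there with constants uniform in $\eps$, and then extracting a weak $L^2$ limit. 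A secondary technicality is the density of smooth forms in $\dom(\dbar)\cap\dom(\dbarstar)$ in the weighted norm, which can be handled by a transverse vector field argument as in Straube~\cite{Str97}.
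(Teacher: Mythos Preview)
The paper does not prove Theorem~\ref{thm:BeCh00}; it is quoted from Berndtsson and Charpentier~\cite{BeCh00} and used as a black box (indeed, Proposition~\ref{prop:basic_estimate} invokes it as an input). So there is no ``paper's own proof'' to compare your attempt against.

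That said, your plan has a genuine gap worth flagging. A weighted twisted Morrey--Kohn--H\"ormander estimate of the kind you describe naturally yields information about the \emph{weighted} $\dbar$-complex: it gives you a weighted Neumann operator $N_{q,\phi}$ and a weighted adjoint $\dbarstar_\phi$, and hence boundedness of the \emph{weighted} Bergman projection $P_{q,\phi}$ on $L^2_{(0,q)}(\Omega,(-\rho)^{-2s})$. But the theorem concerns the \emph{unweighted} projection $P_q$ acting on the weighted space. Your step ``a standard Lax--Milgram argument gives continuity of $N_q$ on $L^2_{(0,q)}(\Omega,(-\rho)^{-2s})$'' conflates the two: Lax--Milgram in the weighted inner product produces the weighted inverse, not the unweighted $N_q$, and Kohn's identity $P_q=I-\dbarstar N_{q+1}\dbar-\dbar N_{q-1}\dbarstar$ involves the unweighted $\dbarstar$ and $N_q$.

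The actual Berndtsson--Charpentier argument confronts exactly this point. Roughly, they use a Donnelly--Fefferman type twist (close in spirit to what you propose) to show that the unweighted canonical solution $\dbarstar N_{q+1}\alpha$ to $\dbar u=\alpha$ satisfies a weighted $L^2$ bound whenever $\alpha$ does; the passage from weighted to unweighted minimal solution is handled by a specific comparison argument, together with self-adjointness of $P_q$ to obtain the dual weight. Your outline contains the right analytic ingredients (the Diederich--Fornaess defining function, the twist, regularization on interior subdomains), but the bridge from ``weighted basic estimate'' to ``unweighted $P_q$ bounded on the weighted space'' is the heart of the matter and is not supplied.
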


When $q=0$, the range of the Bergman Projection is holomorphic, and hence harmonic, so we may use this theorem with \eqref{eq:Lp_norm_bound_1} and \eqref{eq:weighted_norm_bound_1} to show
\[
  \norm{P_0 u}_{L^p(\Omega)}\lesssim\norm{\delta^{-n(p-2)/p}P_0 u}_{L^2(\Omega)}\lesssim\norm{\delta^{-n(p-2)/p}u}_{L^2(\Omega)}\lesssim\norm{u}_{L^{p'}(\Omega)}
\]
provided $0<\frac{n(p-2)}{p}<\min\left\{\frac{DF(\Omega)}{2},\frac{p'-2}{2p'}\right\}$.  Since $P_0$ is self-adjoint, we also have the dual estimate
\begin{equation}
\label{eq:Bergman_projection_estimate}
  \norm{P_0 u}_{L^{p'/(p'-1)}(\Omega)}\lesssim\norm{u}_{L^{p/(p-1)}(\Omega)}
\end{equation}
whenever $u\in L^{p/(p-1)}(\Omega)$ and $0<\frac{n(p-2)}{p}<\min\left\{\frac{DF(\Omega)}{2},\frac{p'-2}{2p'}\right\}$.

\begin{prop}
\label{prop:basic_estimate}
  Let $\Omega\subset\mathbb{C}^n$ be a bounded Lipschitz pseudoconvex domain and suppose that there exist constants $0<\eps\leq\frac{1}{2}$ and $C>0$ and a continuous function $\lambda$ satisfying $0\leq\lambda\leq 1$ and
  \[
    i\ddbar\lambda\geq iC(\delta(z))^{-2\eps}\ddbar\abs{z}^2
  \]
  on $\Omega$ in the sense of currents.  Then for all $1\leq q\leq n$ and $0\leq s<\frac{DF(\Omega)}{2}$ we have the estimate
  \begin{equation}
  \label{eq:basic_estimate_with_weights}
    \norm{\delta^{s-\eps}u}_{L^2(\Omega)}\leq k_{s,q,C,\Omega}\left(\norm{\delta^s\dbar^* u}_{L^2(\Omega)}+\norm{\delta^s\dbar u}_{L^2(\Omega)}\right),
  \end{equation}
  whenever $u\in L^2_{(0,q)}(\Omega)\cap\dom\dbar\cap\dom\dbar^*$ for some constant $k_{s,q,C,\Omega}>0$.
\end{prop}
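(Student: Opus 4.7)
\emph{Proof proposal.}

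The plan is to derive \eqref{eq:basic_estimate_with_weights} from the Kohn-Morrey-H\"ormander (KMH) identity with a carefully chosen plurisubharmonic weight built from Lemma \ref{lem:diederich_fornaess_weight}. First I would apply Lemma \ref{lem:diederich_fornaess_weight} with $\eta = 2s$, which is admissible since $s < DF(\Omega)/2$. This produces a continuous $\mu$ on $\Omega$ satisfying $(\delta(z))^{2s} \leq -\mu(z) \leq k_{2s}(\delta(z))^{2s}$ and
\[
  i\ddbar\mu \geq c(-\mu)(\delta(z))^{-2\eps}\, i\ddbar\abs{z}^2
\]
in the sense of currents. Setting $\phi = -\log(-\mu)$, we have $e^{-\phi} = -\mu \asymp (\delta(z))^{2s}$, and a direct computation gives
\[
  i\ddbar\phi - i\partial\phi\wedge\dbar\phi = -\frac{i\ddbar\mu}{\mu} \geq c(\delta(z))^{-2\eps}\, i\ddbar\abs{z}^2,
\]
so $\phi$ is plurisubharmonic with self-bounded complex gradient plus a $\delta^{-2\eps}$ curvature excess. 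Pointwise, for each ordered multi-index $I$ of length $q-1$,
\[
  \sum_{j,k}\phi_{j\bar k}u_{jI}\overline{u_{kI}} \geq \Bigl|\sum_j \phi_{\bar j}u_{jI}\Bigr|^2 + c(\delta(z))^{-2\eps}\sum_j \abs{u_{jI}}^2.
\]

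Next I would apply the KMH identity with weight $e^{-\phi}$. Pseudoconvexity makes the Levi form boundary term non-negative, so it may be dropped:
\[
  \norm{\dbar u}^2_\phi + \norm{\dbarstar_\phi u}^2_\phi \geq \int_\Omega \sum_{I,j,k}\phi_{j\bar k}u_{jI}\overline{u_{kI}}\,e^{-\phi}\,dV,
\]
where $\dbarstar_\phi$ is the adjoint of $\dbar$ with respect to $\langle\cdot,\cdot\rangle_\phi$. The unweighted adjoint enters via $\dbarstar_\phi u = \dbarstar u + \iota_{\dbar\phi}u$; expanding $\norm{\dbarstar_\phi u}^2_\phi$ and using the pointwise inequality of the previous paragraph to cancel $\norm{\iota_{\dbar\phi}u}^2_\phi$ on both sides yields
\[
  cq\int_\Omega \abs{u}^2(\delta(z))^{-2\eps}(-\mu)\,dV \leq \norm{\dbar u}^2_\phi + \norm{\dbarstar u}^2_\phi + 2\re\langle \dbarstar u, \iota_{\dbar\phi}u\rangle_\phi.
\]
The cross term is controlled by Cauchy-Schwarz with absorption, using that $\iota_{\dbar\phi}u$ is pointwise dominated by the curvature form from the self-bounded gradient property. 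Translating $-\mu \asymp \delta^{2s}$ back into $\delta^{2s}$-weighted norms produces \eqref{eq:basic_estimate_with_weights}, with $k_{s,q,C,\Omega}$ absorbing the accumulated constants.

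To handle the various regularity issues, I would regularize $\mu$ by convolution on an interior exhaustion, preserving the Hessian bound up to a constant, carry out the argument for smooth weights and smooth $u \in C^\infty_{(0,q)}(\overline\Omega)\cap\dom\dbar\cap\dom\dbarstar$, and then pass to general $u \in L^2_{(0,q)}(\Omega)\cap\dom\dbar\cap\dom\dbarstar$ via density of smooth forms in the graph norm on bounded Lipschitz pseudoconvex domains (see \cite{Str10}), together with Fatou's lemma.

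The main obstacle I foresee is controlling the cross term $2\re\langle\dbarstar u, \iota_{\dbar\phi}u\rangle_\phi$: a naive Cauchy-Schwarz yields $\norm{\iota_{\dbar\phi}u}^2_\phi$ whose pointwise integrand behaves like $\delta^{2s-2}\abs{u}^2$ near $\partial\Omega$, strictly worse than the targeted $\delta^{2s-2\eps}\abs{u}^2$ since $\eps \leq 1/2 < 1$. The resolution relies on the $\dbarstar$-boundary condition forcing the component of $u$ paired with $\dbar\phi$ (essentially the complex normal direction, since $\dbar\phi \propto \dbar\delta$ up to the boundary) to vanish there, so $\iota_{\dbar\phi}u$ is better than $\abs{\dbar\phi}\abs{u}$ would suggest, and the absorption closes.
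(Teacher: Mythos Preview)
Your overall strategy is the same as the paper's---build a weight from Lemma~\ref{lem:diederich_fornaess_weight}, plug into a Kohn--Morrey--H\"ormander-type identity, and absorb a cross term---but there is a genuine gap in the absorption step, and your proposed fix via the $\dbarstar$-boundary condition does not work.

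The problem is your choice $\eta=2s$, i.e., working with $\phi=-\log(-\mu)$ itself rather than a fractional power.  With this choice the self-bounded gradient inequality gives exactly
\[
  i\ddbar\phi \;\ge\; i\partial\phi\wedge\dbar\phi + c\,\delta^{-2\eps}\,i\ddbar|z|^2,
\]
and after expanding $\dbarstar_\phi u=\dbarstar u+\Lambda u$ and cancelling $\norm{\Lambda u}^2_\phi$ on both sides you are left with $2\re\langle\dbarstar u,\Lambda u\rangle_\phi$ and \emph{no remaining curvature} to absorb it: every copy of $\norm{\Lambda u}^2_\phi$ coming from the Hessian has already been spent.  A Cauchy--Schwarz on the cross term produces $\norm{\Lambda u}^2_\phi\sim\int\delta^{2s-2}|u_{\mathrm{normal}}|^2$, which is strictly worse than the target $\int\delta^{2s-2\eps}|u|^2$.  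Your appeal to the $\dbarstar$-boundary condition does not rescue this: that condition is a trace statement and gives no quantitative interior decay of the normal component for a general $u\in L^2_{(0,q)}(\Omega)\cap\dom\dbar\cap\dom\dbarstar$; in particular it does not yield $|\Lambda u|\lesssim\delta^{-\eps}|u|$.

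The repair is exactly what the paper does: choose $\eta\in(2s,DF(\Omega))$ and work with $a=(-\mu)^t$, $t=2s/\eta<1$ (equivalently $\phi=-t\log(-\mu)$ in your weighted formulation).  Then
\[
  i\ddbar\phi \;\ge\; t^{-1}\,i\partial\phi\wedge\dbar\phi + tc\,\delta^{-2\eps}\,i\ddbar|z|^2,
\]
so after the same cancellation an \emph{extra} $(t^{-1}-1)\norm{\Lambda u}^2_\phi$ survives on the right, and Cauchy--Schwarz closes cleanly, giving
\[
  \norm{\sqrt{a}\,\dbar u}^2_{L^2}+(1-t)^{-1}\norm{\sqrt{a}\,\dbarstar u}^2_{L^2}\;\gtrsim\;\norm{\sqrt{a}\,\delta^{-\eps}u}^2_{L^2}.
\]
The paper carries this out via the twisted identity (Proposition~2.4 in \cite{Str10}) with twist $a$, but your weighted formulation with $t<1$ is equivalent.

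A secondary issue: your regularization scheme (density of smooth forms in the graph norm on $\Omega$) glosses over the fact that the boundary term in the KMH identity is not classically defined on a Lipschitz boundary.  The paper handles this by exhausting $\Omega$ by smooth strictly pseudoconvex $\Omega_j$, proving the estimate for $\dbar N_{q-1,j}f$ and $\dbarstar N_{q+1,j}g$ there via Kohn's regularity, and passing to the limit using weak convergence together with Theorem~\ref{thm:BeCh00}; the final estimate for general $u$ then follows from the Hodge decomposition $u=\dbar N_{q-1}\dbarstar u+\dbarstar N_{q+1}\dbar u$.
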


\begin{proof}
Let $\Omega=\bigcup_j\Omega_j$ where $\{\Omega_j\}_{j\in\mathbb{N}}$ is a sequence of smooth, strictly pseudoconvex domains satisfying $\overline{\Omega_j}\subset\Omega_{j+1}$.  Set $\eta=\sqrt{2 s DF(\Omega)}$, and let $\mu$ be the weight function given by Lemma \ref{lem:diederich_fornaess_weight}.  Note that $\frac{(DF(\Omega)-\eta)C}{2 e DF(\Omega)}=\frac{(\eta-2s)C}{2 e \eta}$ if $\eta\not=0$, and $\frac{(DF(\Omega)-\eta)C}{2 e DF(\Omega)}=\frac{C}{2e}$ if $\eta=0$.  Regularizing by convolution, we may assume that $\mu_j$ is a smooth function on $\overline{\Omega_j}$ such that $\mu_k\rightarrow\mu$ on $\overline\Omega_j$ when $k\geq j$, $i\ddbar\mu_j\geq i \frac{(\eta-2s)C}{9 \eta}(-\mu_j)\delta^{-2\eps}\ddbar\abs{z}^2$ on $\Omega_j$, and $\delta^\eta\leq-\mu_j\leq k_\eta\delta^\eta$ on $\overline\Omega_j$.

\begin{rem}
\label{rem:Kohn_results}
Kohn's work in \cite{Koh78,Koh79} implies that the Bergman Projection $P_{q,j}$ and $\dbar$-Neumann operator $N_{q,j}$ exist on $\Omega_j$ and preserve $C^\infty_{(0,q)}(\overline\Omega_j)$.  Since $\mu_j$ is bounded away from zero on $\overline\Omega_j$, the $L^2$ norm weighted by a power of $-\mu_j$ is equivalent to the unweighted $L^2$ norm for any $s\in\mathbb{R}$.  Hence, on $\Omega_j$ we will prove a priori estimates for forms with coefficients in $C^\infty(\overline\Omega_j)$ and immediately obtain estimates for forms with coefficients in $L^2(\Omega_j)$.
\end{rem}

We recall the `twisted' Kohn-Morrey-H\"ormander formula (see Proposition 2.4 in \cite{Str10}).  If $u\in C^1_{(0,q)}(\overline\Omega_j)\cap\dom\dbar^*$ with $1\leq q\leq n$ and $a\in C^2(\overline\Omega_j)$, then
\begin{multline}
\label{eq:basic_estimate_1}
  \norm{\sqrt{a}\dbar u}^2_{L^2(\Omega_j)}+\norm{\sqrt{a}\dbar^* u}^2_{L^2(\Omega_j)}\geq 2\re\left({\sum_K}'\sum_{\ell=1}^n u_{\ell K}\frac{\partial a}{\partial z_\ell}d\bar z_K,\dbar^* u\right)_{L^2(\Omega_j)}\\
  +{\sum_K}'\sum_{\ell,k=1}^n\int_{\Omega_j}\left(-\frac{\partial^2 a}{\partial z_\ell\partial\bar z_k}\right)u_{\ell K}\bar u_{k K}dV.
\end{multline}
Here we have set $\varphi=0$ and omitted both the boundary term, which is positive on pseudoconvex domains, and the gradient term, which is also positive.  We set $t=\frac{2s}{\eta}$ when $\eta\neq 0$, and $t=\frac{1}{2}$ when $\eta=0$.  We set $a=(-\mu_j)^t$.  Then
\[
  \frac{\partial a}{\partial z_\ell}=-t(-\mu_j)^{t-1}\frac{\partial\mu_j}{\partial z_\ell},
\]
and
\[
  \frac{\partial^2 a}{\partial z_\ell\partial\bar z_k}=-ta(-\mu_j)^{-1}\frac{\partial^2\mu_j}{\partial z_\ell\partial\bar z_k}-t^{-1}(1-t)a^{-1}\frac{\partial a_j}{\partial z_\ell}\frac{\partial a_j}{\partial\bar z_k}.
\]
Hence,
\[
  i\ddbar(-a)\geq i \frac{C}{9}t(1-t)a\delta^{-2\eps}\ddbar\abs{z}^2+it^{-1}(1-t) a^{-1}\partial a\wedge\dbar a.
\]
Substituting in \eqref{eq:basic_estimate_1}, we have
\begin{multline*}
  \norm{\sqrt{a}\dbar u}^2_{L^2(\Omega_j)}+\norm{\sqrt{a}\dbar^* u}^2_{L^2(\Omega_j)}\geq 2\re\left({\sum_K}'\sum_{\ell=1}^n u_{\ell K}\frac{\partial a}{\partial z_\ell}d\bar z_K,\dbar^* u\right)_{L^2(\Omega_j)}\\
  +\frac{Cq}{9}t(1-t)\norm{\sqrt{a}\delta^{-\eps}u}^2_{L^2(\Omega_j)}+t^{-1}(1-t)\norm{a^{-1/2}{\sum_K}'\sum_{\ell=1}^n u_{\ell K}\frac{\partial a}{\partial z_\ell}d\bar z_K}^2_{L^2(\Omega_j)}.
\end{multline*}
The Cauchy-Schwarz inequality gives us
\[
  \norm{\sqrt{a}\dbar u}^2_{L^2(\Omega_j)}+(1-t)^{-1}\norm{\sqrt{a}\dbar^* u}^2_{L^2(\Omega_j)}\geq \frac{Cq}{9}t(1-t)\norm{\sqrt{a}\delta^{-\eps}u}^2_{L^2(\Omega_j)}.
\]
If we let $u=\dbar N_{q-1,j} f$ for $f\in C^\infty_{(0,q-1)}(\overline\Omega_j)$, we have
\[
  (1-t)^{-1}\norm{\sqrt{a}(I-P_{q-1,j})f}^2_{L^2(\Omega_j)}\geq \frac{Cq}{9}t(1-t)\norm{\sqrt{a}\delta^{-\eps}\dbar N_{q-1,j} f}^2_{L^2(\Omega_j)},
\]
and if we let $u=\dbar^* N_{q+1,j} g$ for $g\in C^\infty_{(0,q+1)}(\overline\Omega_j)$, we have
\[
  \norm{\sqrt{a}P_{q+1,j}g}^2_{L^2(\Omega_j)}\geq \frac{Cq}{9}t(1-t)\norm{\sqrt{a}\delta^{-\eps}\dbar N_{q+1,j} g}^2_{L^2(\Omega_j)}.
\]
By using Remark \ref{rem:Kohn_results} we can remove the smoothness assumptions on $f$ and $g$.  If we assume that $f$ and $g$ are restrictions of forms with coefficients in $L^2(\Omega,\sqrt{a})$, then we may extend $\dbar N_{q-1,j}f$, $(I-P_{q-1,j})f$, $\dbar^* N_{q+1,j}g$, and $P_{q+1,j}g$ to be zero on $\Omega\backslash\Omega_j$.  From Berndtsson and Charpentier's Theorem \eqref{thm:BeCh00}, we know that the Bergman Projection is continuous in $L^2(\Omega,a^{-1})$.  Since the Bergman Projection is self-adjoint, we also know that it is continuous in $L^2(\Omega,a)$.  Using the same techniques as the proof of Theorem 1.2 in \cite{MiSh98} (see also 2.10 in \cite{Str10}), we can extract subsequences converging weakly to $\dbar N_{q-1} f$, $(I-P_{q-1})f$, $\dbar^* N_{q+1}g$, and $P_{q+1}g$ in $L^2(\Omega,a)$, so we obtain
\begin{equation}
\label{eq:dbar_N_f_estimate}
  \norm{\sqrt{a}\delta^{-\eps}\dbar N_{q-1} f}_{L^2(\Omega)}\leq \frac{3}{(1-t)\sqrt{tCq}}\norm{\sqrt{a}(I-P_{q-1})f}_{L^2(\Omega)},
\end{equation}
for all $f\in L^2_{(0,q-1)}(\Omega)$ and
\begin{equation}
\label{eq:dbar_star_N_f_estimate}
  \norm{\sqrt{a}\delta^{-\eps}\dbar N_{q+1} g}_{L^2(\Omega)}\leq \frac{3}{\sqrt{t(1-t)Cq}}\norm{\sqrt{a}P_{q+1}g}_{L^2(\Omega)}.
\end{equation}
for all $g\in L^2_{(0,q+1)}(\Omega)$.

Now, for $u\in L^2_{(0,q)}(\Omega)\cap\dom\dbar\cap\dom\dbar^*$, we set $g=\dbar u$ and $f=\dbar^* u$.  Since we have the orthogonal decomposition $u=\dbar^* N_{q+1}g+\dbar N_{q-1}f$, we may use  \eqref{eq:dbar_N_f_estimate} and \eqref{eq:dbar_star_N_f_estimate} to obtain
\[
  \norm{\sqrt{a}\delta^{-\eps}u}_{L^2(\Omega)}\leq\frac{3}{(1-t)\sqrt{tCq}}\norm{\sqrt{a}\dbar^* u}_{L^2(\Omega)}+\frac{3}{\sqrt{t(1-t)Cq}}\norm{\sqrt{a}\dbar u}_{L^2(\Omega)},
\]
from which \eqref{eq:basic_estimate_with_weights} follows.

\end{proof}

\section{Proof of Main Result}

Fix $p>2$ satisfying \eqref{eq:p_range}.  Let $u\in L^2_{(0,q)}(\Omega)\cap\dom\dbar\cap\dom\dbar^*$ for $1\leq q\leq n$.  Then $\alpha=(\dbar\dbar^*+\vartheta\dbar)u$ is a distribution in the $L^2$ Sobolev space $W^{-1}_{(0,q)}(\Omega)$.  Using the $L^2$ theory for the Laplacian we may find $\tilde u$ in $W^1_{(0,q)}(\Omega)$ satisfying $\alpha=(\dbar\dbar^*+\vartheta\dbar)\tilde u$ with vanishing boundary trace.  Furthermore,
\begin{equation}
\label{eq:tilde_u_estimate}
  \norm{\tilde{u}}_{W^1(\Omega)}\lesssim\norm{\alpha}_{W^{-1}(\Omega)}\lesssim\norm{\dbar u}_{L^2(\Omega)}+\norm{\dbar^* u}_{L^2(\Omega)}.
\end{equation}
The Sobolev Embedding Theorem gives us $\tilde{u}\in L^p_{(0,q)}(\Omega)$ whenever $p<\frac{2n}{n-2}$. Hence, it is only necessary to estimate $h=u-\tilde{u}$.  Note that $h$ has harmonic coefficients and satisfies $h\in\dom\dbar\cap\dom\dbar^*$.  By \eqref{eq:Lp_norm_bound_1}, we have
\[
  \norm{h}_{L^p(\Omega)}\lesssim\norm{\delta^{-n(p-2)/p}h}_{L^2(\Omega)}
\]
When $\eps>\frac{n(p-2)}{p}$, we may use \eqref{eq:basic_estimate_with_weights} with $s=0$ to obtain
\[
  \norm{h}_{L^p(\Omega)}\lesssim\norm{\dbar h}_{L^2(\Omega)}+\norm{\dbar^* h}_{L^2(\Omega)}
\]
Substituting $h=u-\tilde{u}$ in the upper bound, we have
\[
  \norm{h}_{L^p(\Omega)}\lesssim\norm{\dbar u}_{L^2(\Omega)}+\norm{\dbar^* u}_{L^2(\Omega)}+\norm{\tilde u}_{W^1(\Omega)},
\]
but then \eqref{eq:tilde_u_estimate} gives us
\[
  \norm{h}_{L^p(\Omega)}\lesssim\norm{\dbar u}_{L^2(\Omega)}+\norm{\dbar^* u}_{L^2(\Omega)}.
\]
Since we have already shown that $\norm{\tilde{u}}_{L^p(\Omega)}$ shares this upper bound, we conclude
\begin{equation}
\label{eq:L_p_basic_estimate}
  \norm{u}_{L^p(\Omega)}\lesssim\norm{\dbar u}_{L^2(\Omega)}+\norm{\dbar^* u}_{L^2(\Omega)}
\end{equation}
for all $p>2$ in the range given by \eqref{eq:p_range} and $u\in\dom\dbar\cap\dom\dbar^*$.  Note that this is an actual estimate, so it is not necessary to assume a priori that $u\in L^p_{(0,q)}(\Omega)$.

Since \eqref{eq:L_p_basic_estimate} is an actual estimate, it is easy to prove estimates for our solution operators.  If $f\in L^2_{0,q-1}(\Omega)$, we may set $u=\dbar N_{q-1}f$ and substitute in \eqref{eq:L_p_basic_estimate} to obtain
\[
  \norm{\dbar N_{q-1}f}_{L^p(\Omega)}\lesssim\norm{(I-P_{q-1})f}_{L^2(\Omega)}\leq\norm{f}_{L^2(\Omega)}.
\]
Similarly, for $g\in L^2_{0,q+1}(\Omega)$, we set $u=\dbar^* N_{q+1}g$ and use \eqref{eq:L_p_basic_estimate} to obtain
\[
  \norm{\dbar^* N_{q+1}g}_{L^p(\Omega)}\lesssim\norm{P_{q+1}g}_{L^2(\Omega)}\leq\norm{g}_{L^2(\Omega)}.
\]
Let $p^*=\frac{p}{p-1}$ denote the conjugate exponent for $p$, so that for $u\in L^{p^*}_{0,q}(\Omega)$ we immediately obtain the dual estimates
\begin{align*}
  \norm{\dbar N_{q}u}_{L^2(\Omega)}&\lesssim\norm{u}_{L^{p^*}(\Omega)},\\
  \norm{\dbar^* N_{q}u}_{L^2(\Omega)}&\lesssim\norm{u}_{L^{p^*}(\Omega)}.
\end{align*}
Using Range's formula $N_q=(\dbar^*N_{q+1})(\dbar N_q)+(\dbar N_{q-1})(\dbar^* N_q)$, we now have the necessary estimates to prove
\[
  \norm{N_q u}_{L^p(\Omega)}\lesssim\norm{u}_{L^{p^*}(\Omega)}
\]
for all $1\leq q\leq n$.

Since $\dbar^* N_{q+1}$ is the canonical solution operator to the inhomogeneous Cauchy-Riemann equations, we are only missing a solution operator when $q=0$.  We will study this by examining the dual operator $(\dbar^* N_1)^*$, which is equal to $N_1\dbar$ on $\dom\dbar$.

Fix $p$ satisfying \eqref{eq:p_range_2}.  We wish to show that $(\dbar^* N_1)^*$ is continuous from the dual space $L^{p/(p-1)}(\Omega)$ to $L^2(\Omega)$.  We easily check that $(\dbar^* N_1)^*$ maps into $\dom\dbar^*$, and $\dbar^*(\dbar^* N_1)^*=I-P_0$.  Note that \eqref{eq:p_range_2} is equivalent to $0<\frac{n(p-2)}{p}<\min\left\{\frac{DF(\Omega)}{4n},\frac{\eps}{2n}\right\}$.  Fix $p'$ satisfying $0<\frac{n(p-2)}{p}<\frac{p'-2}{2p'}<\min\left\{\frac{DF(\Omega)}{4n},\frac{\eps}{2n}\right\}$. From \eqref{eq:Bergman_projection_estimate}, we see that
\[
  \norm{(\dbar^* N_1)^* g}_{L^2(\Omega)}\lesssim\norm{g}_{L^{p/(p-1)}(\Omega)}
\]
will follow from
\[
  \norm{(\dbar^* N_1)^* g}_{L^2(\Omega)}\lesssim\norm{\dbar^*(\dbar^* N_1)^* g}_{L^{p'/(p'-1)}(\Omega)},
\]
so it will suffice to prove
\[
  \norm{u}_{L^2(\Omega)}\lesssim\norm{\dbar^*u}_{L^{p'/(p'-1)}(\Omega)}
\]
for all $u\in L^2_{(0,1)}(\Omega)\cap\dom\dbar^*\cap\ker\dbar$ such that $\dbar^* u\in L^{p'/(p'-1)}(\Omega)$.

Let $p^*=\frac{p'}{p'-1}$ denote the conjugate exponent to $p'$, and note that $\frac{2-p^*}{p^*}=\frac{p'-2}{p'}$.  Let $u\in\dom\dbar\cap\dom\dbar^*$ satisfy $\dbar u\in L^{p^*}_{(0,2)}(\Omega)$ and $\dbar^* u\in L^{p^*}_{(0,0)}(\Omega)$.  Then $\alpha=(\dbar\dbar^*+\vartheta\dbar)u$ is a distribution with coefficients in the $L^{p^*}$ Sobolev space $L^{p^*}_{-1}(\Omega)$.  When $n=1$, $\frac{p'-2}{2p'}<\frac{1}{4}$, so $p'<4$.  When $n\geq 2$, $\frac{p'-2}{2p'}<\frac{1}{8}$, so $p'<\frac{8}{3}<3$.  Hence, we may use Theorem 0.5 in \cite{JeKe95} to find $\tilde u$ with coefficients in $L^{p^*}_1(\Omega)$ satisfying $\alpha=(\dbar\dbar^*+\vartheta\dbar)\tilde u$ with vanishing boundary trace.  Furthermore,
\[
  \norm{\tilde{u}}_{L^{p^*}_1(\Omega)}\lesssim\norm{\alpha}_{L^{p^*}_{-1}(\Omega)}\lesssim\norm{\dbar u}_{L^{p^*}(\Omega)}+\norm{\dbar^* u}_{L^{p^*}(\Omega)}.
\]
Once again, $h=u-\tilde{u}$ has harmonic coefficients and satisfies $h\in\dom\dbar\cap\dom\dbar^*$, $\dbar h\in L^{p^*}_{(0,2)}(\Omega)$, and $\dbar^* h\in L^{p^*}_{(0,0)}(\Omega)$.  Furthermore, $\dbar h$ and $\dbar^* h$ will also have harmonic coefficients.

Set $s=\frac{n(2-p^*)}{p^*}$, so since $s<\eps$ we obtain from Proposition \ref{prop:basic_estimate}
\[
  \norm{h}_{L^2(\Omega)}\lesssim\norm{\delta^s\dbar^* h}_{L^2(\Omega)}+\norm{\delta^s\dbar h}_{L^2(\Omega)}.
\]
We may use \eqref{eq:Lp_norm_bound_2} to show
\[
  \norm{h}_{L^2(\Omega)}\lesssim\norm{\dbar^* h}_{L^{p^*}(\Omega)}+\norm{\dbar h}_{L^{p^*}(\Omega)}.
\]
As before, this suffices to prove
\[
  \norm{u}_{L^2(\Omega)}\lesssim\norm{\dbar^* u}_{L^{p^*}(\Omega)}+\norm{\dbar u}_{L^{p^*}(\Omega)}.
\]
for any $u\in L^2_{(0,1)}(\Omega)\cap\dom\dbar\cap\dom\dbar^*$ satisfying $\norm{\dbar^* u}_{L^{p^*}(\Omega)}+\norm{\dbar u}_{L^{p^*}(\Omega)}<\infty$.  We have already explained why this implies
\[
  \norm{(\dbar^* N_1)^* g}_{L^2(\Omega)}\lesssim\norm{g}_{L^{p/(p-1)}(\Omega)},
\]
for all $g\in L^{p/(p-1)}(\Omega)$, and hence
\[
  \norm{\dbar^* N_1 f}_{L^p(\Omega)}\lesssim\norm{f}_{L^2(\Omega)}
\]
for all $f\in L^2_{(0,1)}(\Omega)$.


\begin{thebibliography}{KLP16}

\bibitem[BC90]{BoCh90}
A.~Bonami and P.~Charpentier, \emph{Boundary value for the canonical solution
  to $\bar\partial$-equation and {$W^{1/2}$} estimates}, preprint, 1990.

\bibitem[BC00]{BeCh00}
B.~Berndtsson and P.~Charpentier, \emph{A {S}obolev mapping property of the
  {B}ergman kernel}, Math. Z. \textbf{235} (2000), 1--10.

\bibitem[BS91]{BoSi91}
A.~Bonami and N.~Sibony, \emph{Sobolev embedding in {${\bf C}^n$} and the
  {$\overline\partial$}-equation}, J. Geom. Anal. \textbf{1} (1991), 307--327.

\bibitem[Cat84]{Cat84}
D.~W. Catlin, \emph{Global regularity of the {$\bar \partial $}-{N}eumann
  problem}, Complex analysis of several variables (Madison, Wis., 1982), Proc.
  Sympos. Pure Math., vol.~41, Amer. Math. Soc., Providence, RI, 1984,
  pp.~39--49.

\bibitem[Cat87]{Cat87}
D.~Catlin, \emph{Subelliptic estimates for the {$\overline\partial$}-{N}eumann
  problem on pseudoconvex domains}, Ann. of Math. (2) \textbf{126} (1987),
  131--191.

\bibitem[Cha88]{Cha88}
D.-C.~E. Chang, \emph{On {$L^p$} and {H}\"older estimates for the
  {$\overline\partial$}-{N}eumann problem on strongly pseudo-convex domains},
  Math. Ann. \textbf{282} (1988), 267--297.

\bibitem[Che17]{Chen14}
L.~Chen, \emph{Weighted {B}ergman projections on the {H}artogs triangle}, J.
  Math. Anal. Appl. \textbf{446} (2017), 546--567.

\bibitem[CL00]{ChL2000}
C.~H. Chang and H.~P. Lee, \emph{{$L^p$} estimates for {$\overline\partial$} in
  some weakly pseudoconvex domains in {${\bf C}^n$}}, Math. Z. \textbf{235}
  (2000), 379--404.

\bibitem[CNS92]{CNS92}
D.-C. Chang, A.~Nagel, and E.~M. Stein, \emph{Estimates for the
  {$\overline\partial$}-{N}eumann problem in pseudoconvex domains of finite
  type in {${\bf C}^2$}}, Acta Math. \textbf{169} (1992), 153--228.

\bibitem[CS01]{ChSh01}
S.-C. Chen and M.-C. Shaw, \emph{Partial differential equations in several
  complex variables}, AMS/IP Studies in Advanced Mathematics, vol.~19, American
  Mathematical Society, Providence, RI, 2001.

\bibitem[CZ16a]{ChakZey}
D.~Chakrabarti and Y.~E. Zeytuncu, \emph{{$L^p$} mapping properties of the
  {B}ergman projection on the {H}artogs triangle}, Proc. Amer. Math. Soc.
  \textbf{144} (2016), 1643--1653.

\bibitem[CZ16b]{ChenZeytuncu}
L.~Chen and Y.~E. Zeytuncu, \emph{Weighted {B}ergman projections on the
  {H}artogs triangle: exponential decay}, New York J. Math. \textbf{22} (2016),
  1271--1282.

\bibitem[Dem87]{Dem87}
J.-P. Demailly, \emph{Mesures de {M}onge-{A}mp\`ere et mesures
  pluriharmoniques}, Math. Z. \textbf{194} (1987), 519--564.

\bibitem[EM16]{EdM16}
L.~D. Edholm and J.~D. McNeal, \emph{{B}ergman subspaces and subkernel:
  Degenerate {$L^p$} mapping and zeros}, preprint, arXiv: 1605.06223, 2016.

\bibitem[FsS91]{FoS89}
J.~E. Forn\ae~ss and N.~Sibony, \emph{On {$L^p$} estimates for
  {$\overline\partial$}}, Several complex variables and complex geometry,
  {P}art 3 ({S}anta {C}ruz, {CA}, 1989), Proc. Sympos. Pure Math., vol.~52,
  Amer. Math. Soc., Providence, RI, 1991, pp.~129--163.

\bibitem[Har07]{Har07b}
P.~S. Harrington, \emph{A quantitative analysis of {O}ka's lemma}, Math. Z.
  \textbf{256} (2007), 113--138.

\bibitem[Har08]{Har08a}
\bysame, \emph{The order of plurisubharmonicity on pseudoconvex domains with
  {L}ipschitz boundaries}, Math. Res. Lett. \textbf{15} (2008), 485--490.

\bibitem[Her07]{Her07}
A.-K. Herbig, \emph{A sufficient condition for subellipticity of the
  {$\overline\partial$}-{N}eumann operator}, J. Funct. Anal. \textbf{242}
  (2007), 337--362.

\bibitem[JK95]{JeKe95}
D.~Jerison and C.~E. Kenig, \emph{The inhomogeneous {D}irichlet problem in
  {L}ipschitz domains}, J. Funct. Anal. \textbf{130} (1995), 161--219.

\bibitem[Ker71]{Ker71}
N.~Kerzman, \emph{H\"older and {$L^{p}$} estimates for solutions of {$\bar
  \partial u=f$} in strongly pseudoconvex domains}, Comm. Pure Appl. Math.
  \textbf{24} (1971), 301--379.

\bibitem[KLP16]{KLP16}
S.~G. Krantz, B.~Liu, and M.~Peloso, \emph{Geometric analysis on the
  diederich-fornaess index}, arXiv:1606.02343, 2016.

\bibitem[Koe04]{Koe04}
K.~D. Koenig, \emph{A parametrix for the {$\overline\partial$}-{N}eumann
  problem on pseudoconvex domains of finite type}, J. Funct. Anal. \textbf{216}
  (2004), 243--302.

\bibitem[Koh63]{Koh78}
J.~J. Kohn, \emph{Harmonic integrals on strongly pseudo-convex manifolds. {I}},
  Ann. of Math. (2) \textbf{78} (1963), 112--148.

\bibitem[Koh64]{Koh79}
\bysame, \emph{Harmonic integrals on strongly pseudo-convex manifolds. {II}},
  Ann. of Math. (2) \textbf{79} (1964), 450--472.

\bibitem[Koh99]{Koh99}
\bysame, \emph{Quantitative estimates for global regularity}, Analysis and
  geometry in several complex variables (Katata, 1997), Trends Math.,
  Birkh\"auser Boston, Boston, MA, 1999, pp.~97--128.

\bibitem[MS98]{MiSh98}
J.~Michel and M.-C. Shaw, \emph{Subelliptic estimates for the
  {$\overline\partial$}-{N}eumann operator on piecewise smooth strictly
  pseudoconvex domains}, Duke Math. J. \textbf{93} (1998), 115--128.

\bibitem[Ov71]{Ovr71}
N.~\O~vrelid, \emph{Integral representation formulas and {$L^{p}$}-estimates
  for the {$\bar \partial $}-equation}, Math. Scand. \textbf{29} (1971),
  137--160.

\bibitem[Str97]{Str97}
E.~J. Straube, \emph{Plurisubharmonic functions and subellipticity of the
  {$\overline\partial$}-{N}eumann problem on non-smooth domains}, Math. Res.
  Lett. \textbf{4} (1997), 459--467.

\bibitem[Str10]{Str10}
\bysame, \emph{Lectures on the {$L^2$}-{S}obolev theory of the
  {$\overline{\partial}$}-{N}eumann problem}, ESI Lectures in Mathematics and
  Physics, European Mathematical Society (EMS), Z\"urich, 2010.

\bibitem[Zey13]{ZeytuncuTran}
Y.~E. Zeytuncu, \emph{{$L^p$} regularity of weighted {B}ergman projections},
  Trans. Amer. Math. Soc. \textbf{365} (2013), 2959--2976.

\end{thebibliography}

\end{document}